\newtheorem{thm}{Theorem}[section]
\newtheorem{lemma}[thm]{Lemma}
\newtheorem{prop}[thm]{Proposition}
\newtheorem{cor}[thm]{Corollary}
\newtheorem*{theorem*}{Theorem}
\theoremstyle{definition}
\newtheorem{fed}[thm]{Definition}
\theoremstyle{remark}
\newtheorem{rem}[thm]{Remark}
\numberwithin{equation}{section}
\newcommand{\C}{{\mathbb{C}}}
\newcommand{\R}{{\mathbb{R}}}
\renewcommand{\P}{{\mathbb{P}}}
\newcommand{\Z}{{\mathbb{Z}}}
\renewcommand{\epsilon}{\varepsilon}
\renewcommand{\phi}{\varphi}
\renewcommand{\theta}{\vartheta}
\DeclareMathOperator{\Aut}{Aut}
\DeclareMathOperator{\SO}{SO}
\begin{document}
\title{Symmetric periodic orbits and uniruled real Liouville domains}
\author[Urs Frauenfelder]{Urs Frauenfelder}

\address{
Department of Mathematics and
Research Institute of Mathematics, Seoul National University\\
Building 27, room 403\\
San 56-1, Sillim-dong, Gwanak-gu, Seoul, South Korea\\
Postal code 151-747
}
\email{frauenf@snu.ac.kr}

\author[Otto van Koert]{Otto van Koert}

\address{
Department of Mathematics and
Research Institute of Mathematics, Seoul National University\\
Building 27, room 402\\
San 56-1, Sillim-dong, Gwanak-gu, Seoul, South Korea\\
Postal code 151-747
}
\email{okoert@snu.ac.kr}

\subjclass[2010]{Primary 34C25, 32Q65, 53D05, 53D10}

\keywords{Symmetric periodic orbits, real symplectic manifolds, uniruled}

\begin{abstract}
A real Liouville domain is a Liouville domain together with an exact anti-symplectic involution. We call
a real Liouville domain uniruled if there exists an invariant finite energy plane through every real point.
Asymptotically an invariant finite energy plane converges to a symmetric periodic orbit. In this note
we work out a criterion which guarantees uniruledness for real Liouville domains.
\end{abstract}

\maketitle

\section{Introduction}

A {\bf real Liouville domain} $(W,\lambda,\varrho)$ is a triple consisting of a Liouville
domain $(W,\lambda)$ and an exact anti-symplectic involution $\varrho \in \mathrm{Diff}(W)$, i.e.~a map $\varrho$ satisfying
$$
\varrho^2=\mathrm{id}, \quad \varrho^* \lambda=-\lambda.
$$
If we restrict $\varrho$ to the boundary $\partial W$ of the Liouville domain $W$ we get a
real contact manifold, meaning a contact manifold together with an involution under which the contact form is anti-invariant. 
If $R$ denotes the Reeb vector field on $\partial W$, then $R$ is anti-invariant under $\varrho$ as well, i.e.
$$
\varrho^* R=-R.
$$
If $T>0$ and $v \in C^\infty([0,T],\partial W)$ is a $T$-periodic orbit for $R$, 
then $v_\varrho \in C^\infty([0,T],\partial W)$ defined as
$$v_\varrho(t)=\varrho(v(T-t))$$
is a $T$-periodic orbit as well.
\begin{fed}
A $T$-periodic orbit $v \in C^\infty([0,T],\partial W)$ is called {\bf symmetric} if 
it satisfies $v=v_\varrho$.
\end{fed}
Symmetric periodic orbits play a prominent role in the restricted three body problem \cite{birkhoff} as well
as in the Seifert conjecture on brake orbits \cite{seifert}. 

The Weinstein conjecture asserts that on every closed contact manifold the Reeb flow admits a periodic orbit. Affirmative answers to this conjecture can be obtained in various cases by taking advantage of
the interplay between holomorphic curves and closed Reeb orbits \cite{hofer-viterbo, liu-tian, lu, wendl}.
To examine this connection in the real case we introduce the notion of a uniruled real Liouville
domain. Note that for a real Liouville domain $(W,\lambda,\varrho)$ the Liouville vector field $X$ defined by the equation $\iota_X d \lambda=\lambda$
is invariant under $\varrho$ and therefore $\varrho$ extends to the completion $V$ of $W$. By abuse
of notation we will use the symbols $\lambda$ and $\varrho$ also for the extensions to $V$. If we
choose on $V$ an SFT-like almost complex structure anti-invariant under $\varrho$, then $\varrho$
induces an involution of finite energy planes on $V$. Inspired by the paper of McLean \cite{mclean}
we make the following definition.
\begin{fed}
\label{realuni}
A real Liouville domain $(W,\lambda, \varrho)$ is called {\bf (real) uniruled} if for
every anti-invariant SFT-like complex structure $J$ on the completion $(V,\lambda,\varrho)$ there
exists an invariant finite energy plane of SFT-energy less than or equal to 1 through every point on the Lagrangian
submanifold $\mathrm{Fix}(\varrho) \subset V$.
\end{fed}

The asymptotic behavior of finite energy planes as studied in \cite{hofer-wysocki-zehnder1, hofer-wysocki-zehnder2, hofer-wysocki-zehnder3, mora} immediately implies
\begin{thm}\label{symper}
Assume that $(W,\lambda,\varrho)$ is a uniruled real Liouville domain. Then there exists a symmetric
periodic orbit of the Reeb vector field $R$ on $\partial W$ of period less than or equal to 1. 
\end{thm}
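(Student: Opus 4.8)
The plan is to turn an invariant finite energy plane, supplied by uniruledness, into a symmetric Reeb orbit, the bridge being the asymptotic analysis of finite energy planes. First recall how $\varrho$ acts on holomorphic planes. If $J$ is anti-invariant, i.e.\ $d\varrho\circ J=-J\circ d\varrho$, and $c(z)=\bar z$ denotes complex conjugation on $\C$, then $\varrho\circ u\circ c$ is again $J$-holomorphic whenever $u\colon\C\to V$ is, because $c$ is anti-holomorphic and the two sign changes cancel. Accordingly, an invariant finite energy plane is one with $\varrho\circ u\circ c=u$, that is $\varrho(u(z))=u(\bar z)$ for all $z\in\C$; in particular $u$ then carries the real axis $\{z=\bar z\}$ into $\mathrm{Fix}(\varrho)$, so $u(0)\in\mathrm{Fix}(\varrho)$, which is why Definition~\ref{realuni} is phrased in terms of points on $\mathrm{Fix}(\varrho)$.

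By Definition~\ref{realuni}, fix an anti-invariant SFT-like almost complex structure $J$ on $V$ and a point $p\in\mathrm{Fix}(\varrho)$; uniruledness then furnishes an invariant finite energy plane $u\colon\C\to V$ of SFT-energy at most $1$ with $u(0)=p$. Such a $u$ is non-constant, since a finite energy plane has a non-removable puncture at $\infty$. Now invoke the asymptotic analysis of \cite{hofer-wysocki-zehnder1, hofer-wysocki-zehnder2, hofer-wysocki-zehnder3, mora}: parametrizing the cylindrical end by $z=e^{2\pi(s+it)}$ with $(s,t)\in[s_0,\infty)\times S^1$ and writing $u=(a,f)$ with values in $\R\times\partial W$ there, one has
\[
f(s,\cdot)\longrightarrow v(T\,\cdot)\ \text{ in }C^\infty(S^1,\partial W),\qquad a(s,t)-Ts\longrightarrow\text{const}\quad(s\to\infty),
\]
for some $T$-periodic Reeb orbit $v$ on $\partial W$ with $T>0$. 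Since the $d\lambda$-part of the SFT-energy of $u$ equals the action $\int v^*\lambda=T$ of the asymptotic orbit, the energy bound forces $T\le 1$.

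It remains to transport the $\varrho$-symmetry of $u$ to $v$. The extension of $\varrho$ to the completion commutes with the Liouville flow, hence acts on the cylindrical end by $(s,x)\mapsto(s,\varrho(x))$, whereas $c(z)=\bar z$ corresponds to $(s,t)\mapsto(s,-t)$; so the identity $\varrho(u(z))=u(\bar z)$ reads $a(s,t)=a(s,-t)$ and $\varrho(f(s,t))=f(s,-t)$ on the end. Letting $s\to\infty$ in the latter gives $\varrho(v(Tt))=v(-Tt)$, i.e.\ (after possibly shifting the parametrization of $v$ by a constant, which is harmless) $\varrho(v(t))=v(-t)=v(T-t)$ for all $t$. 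This is exactly the equation $v=v_\varrho$ of the definition of a symmetric orbit, so $v$ is a symmetric periodic Reeb orbit on $\partial W$ of period $T\le 1$, which proves the theorem. Alternatively, the two rays $t=0$ and $t=1/2$ that make up the punctured real axis show that $v$ meets $\mathrm{Fix}(\varrho)$ in two ``brake'' points, whence symmetry follows from uniqueness of Reeb trajectories through a point of $\mathrm{Fix}(\varrho)$.

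I expect the only genuine subtlety to be the compatibility issue in the last step: one must check that the anti-holomorphic reflection $c$ on the domain preserves the chosen cylindrical coordinates near the puncture and fixes $\infty$, so that the limit orbit inherits the reflection on the nose, not merely up to a rotation of its parametrization — and the invariant real axis running into the puncture is precisely what guarantees this. The remaining ingredients (non-constancy of $u$, the $C^\infty$-convergence at the puncture, and the identification of the $d\lambda$-energy with the period) are standard properties of finite energy planes quoted from the references above.
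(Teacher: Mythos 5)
Your argument is correct and follows exactly the route the paper intends: the paper offers no written proof beyond asserting that the statement is an immediate consequence of the asymptotic analysis of finite energy planes in \cite{hofer-wysocki-zehnder1, hofer-wysocki-zehnder2, hofer-wysocki-zehnder3, mora}, and your proposal simply fleshes out that citation (asymptotic convergence to a $T$-periodic Reeb orbit, the bound $T\le E(u)\le 1$, and the transport of the relation $\varrho\circ u\circ c=u$ to the limit orbit). Your closing alternative --- that the limit orbit meets $\mathrm{Fix}(\varrho)$ because $f(s,0)\in\mathrm{Fix}(\varrho)$ for all $s$, whence symmetry follows from uniqueness of Reeb trajectories --- is in fact the cleanest way to dispose of the parametrization-shift subtlety you flag.
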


\begin{rem}
If one requires that the SFT-energy of the invariant finite energy planes in Definition~\ref{realuni} is less than
or equal to a constant $\kappa>0$ instead of being less than or equal to 1, the period of the symmetric
Reeb orbit in Theorem~\ref{symper} can be estimated from above by the constant $\kappa$. However,
we can always scale $\lambda$ to $\frac{1}{\kappa}\lambda$ so that one does not gain anything
by considering this more general notion.
\end{rem}
The purpose of this note is to provide a condition which guarantees uniruledness for a real Liouville domain.
For this we embed the real Liouville domain into a closed symplectic manifold and use Gromov-Witten theory on this ambient manifold. 
One could use Welschinger's invariants (``real Gromov-Witten theory'') as used for instance in \cite{Welschinger}, but we will argue indirectly.
Let us now explain the properties we require on the ambient manifold.

Assume that $(M,\omega)$ is a closed symplectic manifold that satisfies the Bohr-Sommerfeld condition, namely the cohomology class represented by the symplectic form is integral in the sense that the class $[\omega]$ lies in the image of $H^2(M;\Z)$ in $H^2(M;\R)$.
We suppose in addition that $[\omega]$ is {\bf primitive} in the sense that for every $k>1$ the cohomology class $\frac{1}{k}[\omega]$ is not integral. 
\begin{fed} 
We say that a symplectic hypersurface $\Sigma \subset M$ is 
{\bf primitive} if $[\Sigma]$ is Poincar\'e dual to $[\omega]$. 
\end{fed}
\begin{rem}
If $H^2(M;\Z)$ is torsion free, this notion is unambiguous.
If $H^2(M;\Z)$ has torsion, the class $[\omega]\in H^2_{dR}(M)$ does not uniquely determine an integral cohomology class.
In this latter case, we mean that $[\Sigma]$ is Poincar\'e dual to $[\omega]$ when regarded as a real homology class in $H_{2n-2}(M;\R)$.
\end{rem}

Denote by $h \colon \pi_2(M) \to H_2(M;\mathbb{Z})$ the Hurewicz homomorphism.
\begin{fed} We say that a class $A \in \mathrm{im}(h)$
is {\bf decomposable} if there exist classes $B, C \in \mathrm{im}(h)$ satisfying
$$
A=B+C, \quad \langle [\omega],B \rangle>0, \quad \langle [\omega], C \rangle >0.
$$
We say that $A$ is {\bf indecomposable} if it is not decomposable.
\end{fed}

\begin{fed}
A {\bf decoration} 
$\mathcal{D}=(\Sigma,A,S)$
of $(M,\omega)$ is a triple consisting of a primitive symplectic hypersurface $\Sigma \subset M$,
an indecomposable homology class
$A \in H_*(M)$ and a submanifold $S \subset \Sigma$ satisfying the following two requirements
\begin{description}
 \item[(i)] $A \circ [\Sigma]=1$.
 \item[(ii)] The Gromov-Witten invariant $GW_A([S],[p])$ is odd, where $[p]$
is the homology class of a point. 
\end{description}
We refer to the triple $(M,\omega,\mathcal{D})$ as a {\bf decorated symplectic manifold}. 
\end{fed}

\begin{rem}
By Gromov-Witten invariants we mean the variant defined in~\cite{McDuff_Salamon:holomorphic}, and for this we insist that $S$ is a submanifold rather than a general cycle. 
\end{rem}

\begin{rem}
Note that for a decoration $\mathcal{D}=(\Sigma,A,S)$ we have
$$\langle [\omega],A \rangle=PD([\omega]) \circ A=[\Sigma] \circ A=1$$
so that each holomorphic sphere contributing to the Gromov-Witten invariant 
$GW_A([S],[p])$ has symplectic area equal to 1. 
\end{rem}
\begin{fed}
Assume that $(M,\omega,\mathcal{D})$ is a decorated symplectic manifold with decoration
$\mathcal{D}=(\Sigma,A,S)$. An {\bf anti-decorating involution} $\rho \colon M \to M$
is an anti-symplectic involution satisfying the following conditions
\begin{description}
 \item[(i)] Both $\Sigma$ and $S$ are invariant under $\rho$.
 \item[(ii)] $\rho^*A=-A$.
\end{description}
A {\bf decorated real symplectic manifold} $(M,\omega,\mathcal{D},\rho)$ is a quadruple consisting of  a decorated
symplectic manifold $(M,\omega,\mathcal{D})$ together with an anti-decorating involution $\rho$.
\end{fed}

\begin{fed}
Assume that $(W,\lambda,\varrho)$ is a real Liouville domain and $(M,\omega,\mathcal{D},\rho)$
is a decorated real symplectic manifold. An {\bf embedding of a real Liouville domain into a decorated symplectic manifold} 
$$\varepsilon \colon (W,\lambda,\varrho) \to (M,\omega,\mathcal{D}, \rho)$$
is an embedding $\varepsilon \colon W \to M \setminus \Sigma$ satisfying
$$
d\lambda=\varepsilon^* \omega,\qquad \varrho=\varepsilon^* \rho.
$$
A {\bf Christmas tree} is a quadruple $(W,\lambda,\rho,\varepsilon)$
consisting of a real Liouville domain $(W,\lambda,\rho)$ and an embedding
$\varepsilon \colon (W,\lambda,\rho) \to (M,\omega,\mathcal{D},\rho)$ into a decorated
real symplectic manifold. 
\end{fed}
The main result of this paper is
\begin{thm}
\label{main}
Assume that $(W,\lambda,\rho,\varepsilon)$ is a Christmas tree satisfying $b_1(W)=0$.
Then $(W,\lambda,\rho)$ is real uniruled. 
\end{thm}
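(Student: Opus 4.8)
My strategy is a neck stretching argument along $\partial W$ inside $M$, fed by an odd count of holomorphic spheres in class $A$ whose images are invariant under the involution; the limiting holomorphic building will contain a single invariant finite energy plane in the completion of $W$. Throughout, $\varrho$ denotes the involution of $W$ and of $V$, and $\rho$ the ambient involution of $M$, related by $\varrho=\varepsilon^*\rho$.

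Fix an anti-invariant SFT-like almost complex structure $J$ on $V$ and a point $q\in\mathrm{Fix}(\varrho)$; after enlarging $W$ along its Liouville flow inside $M\setminus\Sigma$ (which commutes with $\varrho$ and, under $\varepsilon$, with the ambient Liouville field near $\varepsilon(\partial W)$) we may assume $q$ lies in the interior of $W$, so that $\varepsilon(q)\in M\setminus\Sigma$. For each $n\in\N$ I choose an $\omega$-compatible almost complex structure $J_n$ on $M$ which is anti-invariant under $\rho$, makes $\Sigma$ holomorphic, agrees with $J$ on $\varepsilon(W)$, is cylindrical of neck length $n$ along $\varepsilon(\partial W)$ (extending the cylindrical structure of $J$ near the end of $V$), and is otherwise generic on a non-empty open $\rho$-invariant set $\Omega\subset M\setminus(\varepsilon(W)\cup\Sigma)$ which is met by every $J_n$-holomorphic $A$-sphere passing through a point of $S$. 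Since $[\omega]$ is integral and $\langle[\omega],A\rangle=1$, every spherical homology class has integral $\omega$-area and none has area in $(0,1)$; hence every $J_n$-holomorphic $A$-sphere is somewhere injective, class-$A$ bubbling cannot occur, and a generic choice of $J_n$ on $\Omega$ makes the moduli space $\mathcal M_n$ of $J_n$-holomorphic $A$-spheres through $\varepsilon(q)$ and meeting $S$ a compact zero-dimensional manifold with $\#\mathcal M_n\equiv GW_A([S],[p])\equiv 1\pmod 2$, the count being unchanged along a generic path of almost complex structures to an unconstrained one.

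The crucial point is that $\mathcal M_n$ carries the $\Z/2$-action $u\mapsto\rho\circ u\circ c$, where $c$ is complex conjugation on $S^2$ fixing the two marked points: this is well defined because $J_n$ is anti-invariant under $\rho$, $\rho(\Sigma)=\Sigma$, $\rho(S)=S$, $\rho^*A=-A$ and $\varepsilon(q)\in\mathrm{Fix}(\rho)$, and its fixed points are precisely the curves with $\rho$-invariant image and $S$-marked point on $\mathrm{Fix}(\rho)$. Counting modulo two, the number of fixed points is $\equiv\#\mathcal M_n\equiv 1$, so for each $n$ there is a curve $u_n\in\mathcal M_n$ with $\rho$-invariant image. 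I expect the genuine technical difficulty of the proof to be the transversality needed here: at a curve with $\rho$-invariant image the standard cokernel-killing argument breaks down because the curve cannot be moved away from its own $\rho$-image, and one has to split the linearized Cauchy--Riemann operator into its $(\pm1)$-eigenspaces under $\rho$ and make both surjective by perturbing $J_n$, within the class of $\rho$-anti-invariant structures, at a somewhere injective point of $u_n$ lying in $\Omega$ and off $\mathrm{Fix}(\rho)$; this is an equivariant transversality argument of the type carried out by Welschinger and Solomon, and indecomposability of $A$ guarantees that no multiply covered curves occur.

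Finally I let $n\to\infty$. By the compactness theorem of symplectic field theory, a subsequence of the $u_n$ converges to a genus zero holomorphic building $\mathbf u$ in class $A$, with $\rho$-invariant image, passing through $\varepsilon(q)$ and through $S$, and meeting the holomorphic hypersurface $\Sigma$ in exactly one point by positivity of intersection together with $A\circ[\Sigma]=1$. Following McLean's analysis, one shows that $\mathbf u$ has a unique non-constant component $P$ lying in the completion $V$ of $W$, and that $P$ is a finite energy plane. This is where the hypothesis $b_1(W)=0$ is used: it forces $H^1_{\mathrm{dR}}(W)=0$, hence that any primitive of $\omega$ on $M$ restricts on $\varepsilon(W)$ to a $1$-form with the same periods as $\lambda$, so that each Reeb orbit $\gamma$ occurring at a breaking has positive ``relative area'' $\int_\gamma\lambda=\mathcal A(\gamma)$; if some sub-building of $\mathbf u$ lay over a positive puncture of a $V$-level component and were disjoint from $\Sigma$, then applying the identity $\langle[\omega],\beta\rangle=\beta\circ[\Sigma]$ to its capped-off class $\beta\in\mathrm{im}(h)$ would force the total, non-negative, energy of that sub-building to equal $-\mathcal A(\gamma)<0$, which is absurd; since exactly one component of $\mathbf u$ touches $\Sigma$, only one non-constant component can survive in the $V$-level, and it must be a plane. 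Because $J_n=J$ on $\varepsilon(W)$, the plane $P$ is $J$-holomorphic, it passes through $q$, and its SFT-energy is at most $\langle[\omega],A\rangle=1$; and since $P$ is the unique non-constant component of $\mathbf u$ in $V$ while the image of $\mathbf u$ is $\varrho$-invariant, the image of $P$ is $\varrho$-invariant. Thus for every anti-invariant SFT-like $J$ on $V$ and every $q\in\mathrm{Fix}(\varrho)$ we have produced an invariant finite energy plane of SFT-energy at most $1$ through $q$, i.e.~$(W,\lambda,\varrho)$ is real uniruled.
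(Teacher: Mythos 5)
Your overall architecture --- an odd Gromov--Witten count plus the involution $u\mapsto\rho\circ u\circ c$ on the moduli space to produce invariant spheres, followed by a stretching argument to extract an invariant finite energy plane --- is the same as the paper's, but the two places where the real work happens are treated differently, and as written your argument has two gaps. First, a fixed point of the induced involution on the \emph{unparametrized} moduli space is only a curve $u$ satisfying $\rho\circ u\circ c=u\circ\phi$ for some $\phi\in PSL_2(\C)$ (a ``pseudo-fixed point'' in the paper's terminology), i.e.\ a curve with $\rho$-invariant image. The induced anti-holomorphic involution $\phi\circ c$ of the domain lies in one of two connected components: it is either conjugate to conjugation (fixed circle) or to the antipodal map (free). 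In the free case no reparametrization of $u$ is an honestly invariant curve, and invariance of the image alone does not give you an invariant plane in the limit. The paper excludes the free case by observing that the unique point of $u^{-1}(\Sigma)$ (unique by $A\circ[\Sigma]=1$ and positivity of intersection with the $\rho$-invariant holomorphic hypersurface $\Sigma$) must be fixed by $\phi\circ c$; you have all the ingredients for this but never make the argument, and it is needed before you can speak of ``the curve $u_n$ with $\rho$-invariant image'' passing invariantly through $\varepsilon(q)$.

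Second, and more consequentially, the equivariant transversality you flag as the main difficulty and defer to Welschinger--Solomon is not needed, and the paper does not perform it. The paper argues by contradiction: if for some anti-invariant $J$ the invariant moduli space were empty, then by the type analysis above there would be \emph{no} pseudo-fixed points at all for nearby anti-invariant $J'$, so every curve in the moduli space would admit a ``$\rho$-injective point'' $w$ with $du(w)\neq 0$ and $u^{-1}\{u(w),\rho u(w)\}=\{w\}$; perturbing $J$ anti-invariantly near such a point achieves \emph{ordinary} transversality within the class of anti-invariant structures, the resulting count equals the odd invariant $GW_A([S],[p])$, and yet the involution acts freely on this finite set --- contradiction. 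Your direct fixed-point count genuinely requires regularity \emph{at} the invariant curves, which is exactly the hard equivariant problem; restructuring as a contradiction removes it, and also yields nonemptiness for \emph{every} anti-invariant $J$ rather than only generic ones, which is what the definition of real uniruledness demands. For the final step, your SFT neck-stretching and building analysis is workable but much heavier than the paper's argument, which restricts the invariant sphere to the disk component of $u^{-1}(W_0)$ through the marked point (a disk of $\omega$-energy at most $1$ by Stokes, using $b_1(W)=0$ to match the Liouville primitive of $M-\nu(\Sigma)$ with $\lambda$ on $W$ --- note that $\omega$ has no primitive on all of $M$, only on the complement of $\Sigma$), pulls the real point back by the Liouville flow, rescales the disks, and takes a Gromov-type limit to obtain the plane directly.
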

Combining Theorem~\ref{main} with Theorem~\ref{symper} we obtain the following Corollary.
\begin{cor}
Assume that $(W,\lambda,\rho,\varepsilon)$ is a Christmas tree satisfying $b_1(W)=0$. Then
there exists a symmetric periodic orbit of period less than or equal to 1 for the Reeb flow on
$\partial W$.
\end{cor}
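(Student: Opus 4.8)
\section*{Proof proposal for the Corollary}

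The Corollary is a formal consequence of the two results already established, so the plan is simply to chain them, taking care that the output of the first is verbatim the input of the second. First I would apply Theorem~\ref{main}. Its hypotheses are exactly that $(W,\lambda,\rho,\varepsilon)$ be a Christmas tree with $b_1(W)=0$, which is precisely what is assumed here; its conclusion is that the underlying real Liouville domain $(W,\lambda,\varrho)$ (writing $\varrho$ for the involution denoted $\rho$ in the Christmas tree) is real uniruled in the sense of Definition~\ref{realuni}. This is exactly the hypothesis required by Theorem~\ref{symper}, so the second step is to feed this conclusion into that theorem: Theorem~\ref{symper} then produces a symmetric periodic orbit of the Reeb field $R$ on $\partial W$ of period at most $1$, which is the assertion of the Corollary.

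To keep the argument self-contained, and because this is where the remaining geometric content sits, I would also recall at the level of a sketch the mechanism inside Theorem~\ref{symper} by which the symmetric orbit is extracted from the plane supplied by uniruledness. Fix any anti-invariant SFT-like almost complex structure $J$ on the completion $V$. Real uniruledness supplies, through a chosen real point $p\in\mathrm{Fix}(\varrho)$, an invariant finite energy plane $u$ of SFT-energy at most $1$. By the asymptotic analysis of finite energy planes in \cite{hofer-wysocki-zehnder1, hofer-wysocki-zehnder2, hofer-wysocki-zehnder3, mora}, as the domain variable tends to infinity the plane $u$ converges to a closed Reeb orbit $v$ on $\partial W$; moreover the period of $v$ is bounded above by the SFT-energy of $u$, hence is at most $1$. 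Because $u$ is invariant under the involution induced by $\varrho$ on finite energy planes, its asymptotic limit inherits this symmetry, so the orbit $v$ satisfies $v=v_\varrho$, i.e. it is symmetric in the sense defined in the introduction. This yields a symmetric periodic orbit of period at most $1$.

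I do not expect any genuine obstacle at the level of the Corollary itself: the only subtlety is bookkeeping, namely confirming that ``real uniruled'' as delivered by Theorem~\ref{main} is the same notion consumed by Theorem~\ref{symper}, that the involutions $\rho$ and $\varrho$ are identified as in the definition of a Christmas tree, and that the normalizing constant $1$ is used consistently (SFT-energy $\le 1$ on the plane side, period $\le 1$ on the orbit side). All of the real difficulty is already absorbed into Theorem~\ref{main}, which produces the invariant finite energy plane from Gromov--Witten data on the ambient decorated symplectic manifold, and into the asymptotic convergence cited above; the Corollary merely transports these through the two statements.
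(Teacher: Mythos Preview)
Your proposal is correct and matches the paper's approach exactly: the Corollary is stated as an immediate consequence of combining Theorem~\ref{main} with Theorem~\ref{symper}, and the paper gives no further argument. Your additional sketch of the asymptotic mechanism behind Theorem~\ref{symper} is extra detail beyond what the paper provides, but it is accurate and does no harm.
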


\section{Definitions and notions of symplectic field theory (SFT)}
By a {\bf real symplectic manifold} we mean a triple $(M,\omega,\rho)$ where $(M,\omega)$ is a symplectic manifold
and $\rho \in \mathrm{Diff}(M)$ is an {\bf anti-symplectic involution}, so
$$
\rho^2=\mathrm{id}, \quad \rho^*\omega=-\omega.
$$
A {\bf Liouville domain} is a compact exact symplectic manifold $(W,\omega=d\lambda)$ with a global Liouville vector field, defined by $i_X\omega=\lambda$, such that the boundary is smooth and convex, meaning that the Liouville vector field $X$ points outward at the boundary.

The boundary of a Liouville domain carries a natural cooriented contact structure. 
Indeed, the Liouville condition implies that $\alpha:=\lambda|_{\partial W}$ is a positive contact form on $\partial W$, so $\alpha \wedge (d\alpha)^{n-1}>0$. 
The hyperplane distribution defined by
$$
\xi=\ker \alpha \subset T \partial W
$$
is called the {\bf contact structure} and the vector field $R$ on $\partial W$ defined by the equations
$$
\iota_R \alpha=1, \quad \iota_R d\alpha=0
$$
is called the {\bf Reeb vector field}.

The following procedure can be used to complete a Liouville domain $W$ into a so-called {\bf Liouville manifold}, which has cylindrical ends instead of convex boundary components.
For each boundary component $C$ of $\partial W$, we attach the positive end of a {\bf symplectization}, given by the symplectic manifold $([0,\infty[\times C,d(e^t \alpha)\, )$, to $W$ along $C$.
The Liouville vector field on the cylindrical end is
$$
X=\frac{\partial}{\partial t}
$$
After this process we obtain a complete Liouville manifold, which we will denote by $(V,\lambda)$

An almost complex structure $J$ on a complete Liouville manifold $V$ is called {\bf compatible} with the symplectic form $\omega=d\lambda$ if
$\omega(\cdot, J \cdot)$ is a Riemannian metric. 
An $\omega$-compatible almost
complex structure $J$ is called {\bf SFT-like} if it satisfies the following conditions
\begin{enumerate}
\item $J$ preserves the hyperplane distribution $\xi$ on $\partial W\subset V$.
\item On $\partial W$ it rotates the Liouville vector field into the Reeb vector field in the sense that
$JX=R$ and $JR=-X$.
\item On the cylindrical end $\partial W \times [0,\infty[$ the almost complex structure is
invariant under the Liouville flow $\phi^t_X$ for $t \in [0,\infty)$.
\end{enumerate}
Pick an SFT-like almost complex structure $J$ on $V$ and assume that
$w \colon (\mathbb{C},i) \to (V,J)$ is a $J$-holomorphic plane. 
We now explain how to define the energy of $w$. This will be a variation of the Hofer energy. 
Choose a small $\delta>0$, indicating the size of a collar neighborhood of $\partial W$, and define
$$
\Lambda:=\Big\{\phi \in C^\infty\big(]-\delta,\infty[, [0,1]\big): \phi'\geq 0,\,\,
\phi'|_{]-\delta,0]}=0\Big\}.
$$
For $\phi \in \Lambda$ define a $1$-form $\lambda_\phi \in \Omega^1(V)$ by
\[
\lambda_\phi(y)=
\begin{cases}
\phi(r)\alpha(x) & \text{if }y=(x,r) \in \partial W \times [0,\infty[\\
\phi(0) \lambda(y) & \text{if } y \in W
\end{cases}
\]
and abbreviate $\omega_\phi=d\lambda_\phi$. 
The {\bf Hofer energy} or {\bf SFT energy} of $w$ is then defined as
$$E(w)=\sup_{\phi \in \Lambda} \int_{\mathbb{C}} w^* \omega_\phi \in [0,\infty].$$
The holomorphic plane $w$ is called {\bf a finite energy plane} if it satisfies
$$
0<E(w)<\infty.
$$
We also have the following non-real version of uniruledness, somewhat different from \cite{mclean}.
\begin{fed}
We call a Liouville domain $(W,\lambda)$ {\bf uniruled} if for
every SFT-like almost complex structure $J$ on its completion $(V,\lambda)$ there exists a finite energy plane through every point of $V$.
\end{fed}

\section{Examples of Christmas trees}
In this section we will discuss some examples of Christmas trees.
An interesting example concerns the canonical contact form and structure on the unit cotangent bundle of a sphere, $(T^*S^n,\lambda_{can},\rho)$, which can be embedded as a real Liouville manifold into the projective quadric with various anti-symplectic involutions $\rho$.
We will check that the projective quadric can be decorated by computing a suitable Gromov-Witten invariant.
Real Liouville structures on $T^*S^2$ include the regularized, planar circular restricted three body problem \cite{Albers_Frauenfelder_Koert_Paternain_Liouville_field_for_PCR2BP}, which has one anti-symplectic involution, and the Hill's lunar problem, which has two commuting anti-symplectic involutions.

Before we verify the decoration requirements for the quadric, we start by giving the following basic lemma.
\begin{lemma}
\label{lemma:real_Liouville_complement}
Let $(M,\omega,\mathcal D=(\Sigma,A,B)\,)$ be a decorated symplectic manifold with anti-decorating involution $\rho$.
Then $M-\nu_M(\Sigma)$ carries the structure of a real Liouville domain, where $\nu_M(\Sigma)$ denotes a tubular neighborhood of $\Sigma$ in $M$. 
\end{lemma}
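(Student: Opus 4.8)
The plan is to apply the standard construction, due to Giroux and carried out in detail by McLean \cite{mclean}, which realizes the complement of a symplectic hypersurface Poincar\'e dual to $[\omega]$ as a Liouville domain, and to make every choice in it $\rho$-equivariant so that the Liouville form comes out anti-invariant for free. Only the primitivity of $\Sigma$ and the relations $\rho^*\omega=-\omega$ and $\rho(\Sigma)=\Sigma$ are used; the indecomposable class $A$ and the submanifold of the decoration play no role. First I would fix auxiliary data: an $\omega$-compatible almost complex structure $J$ with $\rho^*J=-J$ (obtained by averaging an arbitrary compatible $J_0$ with $-d\rho^{-1}\circ J_0\circ d\rho$); the complex line bundle $L=\mathcal O(\Sigma)$ associated to the co-oriented codimension-two submanifold $\Sigma$, together with its canonical section $s$ vanishing transversally exactly along $\Sigma$, so that $c_1(L)=\mathrm{PD}[\Sigma]=[\omega]$ by primitivity; a Hermitian metric on $L$ and a Hermitian connection of curvature $-2\pi i\,\omega$ (which exists because $\omega$ represents $c_1(L)$). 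Since $\rho(\Sigma)=\Sigma$, there is an anti-linear lift $\tilde\rho$ of $\rho$ to the pair $(L,s)$, and all of the above data can be chosen invariant under the $\Z/2$-action generated by $\tilde\rho$; in particular $|s|^2$ becomes a $\rho$-invariant function on $M$ with zero set $\Sigma$.

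Next I would set $f:=-\log|s|^2$ on $M\setminus\Sigma$, which tends to $+\infty$ along $\Sigma$, and take $\lambda:=\tfrac1\kappa\,d^cf$, where $d^cf:=-df\circ J$ and $\kappa>0$ is the constant appearing in the Poincar\'e--Lelong type identity $dd^cf=\kappa\,\omega$ on $M\setminus\Sigma$ (the term $-\log|s|^2$ contributes nothing away from $\Sigma$, while the Hermitian potential contributes a positive multiple of the curvature $-2\pi i\omega$). Then $d\lambda=\omega$, and a short computation shows that the Liouville vector field $X$ given by $\iota_X\omega=\lambda$ is a positive multiple of the gradient $\nabla_gf$ for the metric $g=\omega(\cdot,J\cdot)$. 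Hence $X$ is non-vanishing and positively transverse to the regular level sets of $f$, pointing towards $\Sigma$. Since $s$ vanishes transversally, $\{f>c\}$ is a tubular neighbourhood of $\Sigma$ for every sufficiently large $c$; putting $\nu_M(\Sigma):=\{f>c\}$ and $W:=\{f\le c\}=M\setminus\nu_M(\Sigma)$, we obtain a compact exact symplectic manifold with smooth convex boundary $\{f=c\}$, that is, a Liouville domain $(W,\lambda|_W)$.

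It remains to install the real structure. Since $|s|^2$, and hence $f$, is $\rho$-invariant, $\rho$ preserves every sublevel set of $f$; in particular $\rho(\nu_M(\Sigma))=\nu_M(\Sigma)$, so $\varrho:=\rho|_W$ is a well-defined involution of $W$. A direct computation from $\rho^*J=-J$ and $f\circ\rho=f$ gives $\rho^*(d^cf)=-d^cf$, whence $\varrho^*\lambda=-\lambda$; combined with $\varrho^2=\mathrm{id}$ this exhibits $\varrho$ as an exact anti-symplectic involution of $(W,\lambda)$. Therefore $(M\setminus\nu_M(\Sigma),\lambda,\varrho)$ is a real Liouville domain, as claimed.

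I expect the only genuine work to be the equivariance bookkeeping in the first step: constructing the anti-linear lift $\tilde\rho$ of $\rho$ to $L=\mathcal O(\Sigma)$ and verifying that the Hermitian metric, the connection of curvature $-2\pi i\omega$, and the section $s$ can be chosen simultaneously $\tilde\rho$-invariant, so that $f$ turns out $\rho$-invariant; granting this, the anti-invariance of $\lambda$ is forced by the anti-holomorphicity of $\rho$. Everything else is McLean's construction, and one should additionally record, although it is routine, that the convexity of the level set $\{f=c\}$ near $\Sigma$ genuinely uses that $\Sigma$ is symplectic, via the choice of $J$ making $\Sigma$ a $J$-holomorphic submanifold.
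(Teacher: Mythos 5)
Your route is genuinely different from the paper's, and the comparison is instructive. The paper argues softly: exactness of $\omega|_W$ is extracted from the long exact sequence of the pair $(M,W)$ via the Thom class of the normal bundle of $\Sigma$ (this is where primitivity of $\Sigma$ enters), and anti-invariance is then obtained by a two-line averaging trick --- for any primitive $\lambda$ of $\omega|_W$ the form $\mu:=\lambda+\rho^*\lambda$ is closed and $\rho$-invariant, and $\tilde\lambda:=\lambda-\tfrac12\mu$ satisfies $\rho^*\tilde\lambda=-\tilde\lambda$. No line bundles, no almost complex structure, no section. What your Giroux--McLean construction buys in exchange for the extra machinery is the convexity of the boundary: your Liouville field is visibly a positive multiple of $\nabla_g f$ and points towards $\Sigma$, i.e.\ out of $W=\{f\le c\}$, whereas the paper's proof never checks that the Liouville field of its $\tilde\lambda$ is outward-pointing along $\partial(M-\nu_M(\Sigma))$. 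So your approach actually addresses a point the paper glosses over, and you are right that $A$ and $S$ play no role in either argument.

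That said, two steps of your write-up are assertions rather than proofs. First, the identity $dd^cf=\kappa\,\omega$ on all of $M\setminus\Sigma$ is a Poincar\'e--Lelong statement: it needs $J$ integrable (or at least $\omega$ of type $(1,1)$) and $s$ holomorphic for the Chern connection. For a general compatible $J$ on a general decorated symplectic manifold and a merely smooth transverse section this fails, so your $d\lambda=\omega$ is not established. The standard repair is to take $\lambda$ to be the connection form itself, $\lambda=-\tfrac{1}{2\pi}\,\mathrm{Im}\bigl(\nabla s/s\bigr)$, whose exterior derivative equals $\omega$ on $M\setminus\Sigma$ by the curvature condition alone, and to invoke $J$ and (approximate) holomorphicity of $s$ only near $\Sigma$ to verify convexity there. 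Second, the anti-linear involutive lift $\tilde\rho$ of $\rho$ to $(L,h,\nabla,s)$ is the crux of the real structure and is only announced: one must check that $\rho^*L$ is isomorphic to $\bar L$ (which uses $\rho^*[\omega]=-[\omega]$, with the usual care about torsion), that the lift can be normalized so that $\tilde\rho^2=+\mathrm{id}$ rather than $-\mathrm{id}$, and that $h$, $\nabla$ and $s$ can be averaged without destroying transversality of $s$ or the curvature identity. Both gaps are fillable, but together they constitute most of the work; the paper's averaging of an arbitrary primitive sidesteps all of it at the price of leaving convexity implicit.
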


\begin{proof}
We first show that $W:=M-\nu_M(\Sigma)$ is an exact symplectic manifold.
For this, consider the long exact sequence of the pair in cohomology,
$$
H^2(M,W) \stackrel{j_\Sigma^*}{\longrightarrow} H^2(M) \stackrel{j_W^*}{\longrightarrow} H^2(W).
$$
By Corollary 11.2 of \cite{MilnorStasheff}, the cohomology ring $H^*(M,W)$ is canonically isomorphic to the cohomology ring $H^*(\nu_M(\Sigma),\nu_M(\Sigma)_0)$, associated with the normal bundle of $\Sigma$.
Here $\nu_M(\Sigma)_0$ denotes the normal bundle of $\Sigma$ with its zero-section removed.
Thus the Thom class $u \in H^2(\nu_M(\Sigma),\nu_M(\Sigma)_0)$ corresponds to a class $u'$ in $H^2(M,W)$.
As the homology class $[\Sigma]$ is Poincar\'e dual to $[\omega]$ (over the reals), it follows that $j_\Sigma^* u'$ equals $[\omega]$ by Problem 11-C from \cite{MilnorStasheff}.
By exactness of the long exact sequence of the pair, we see ${j_W^*}[\omega]=j^*_W \circ j^*_\Sigma u'=0$, so there exists a $1$-form $\lambda\in \Omega^1_W$ such that $d\lambda=\Omega:=\omega|_{W}$.

We now show that we can choose a real Liouville form $\tilde \lambda$, i.e.~$\rho^* \tilde \lambda=-\tilde \lambda$. 
Since $d\lambda=\Omega$, and $\rho^* \Omega=-\Omega$ we see that there exists a closed $1$-form $\mu$ such that
$$
\rho^* \lambda=-\lambda+\mu.
$$
Since $\lambda=\rho^* \circ \rho^* \lambda=\lambda-\mu+\rho^*\mu$, we see that $\mu=\rho^*\mu$.
Define $\tilde \lambda:=\lambda-\frac{1}{2} \mu$.
Then $\rho^*\tilde \lambda=\rho^*\lambda-\frac{1}{2}\rho^*\mu=-\lambda+\frac{1}{2}\mu=-\tilde \lambda$.
Hence $(W,\tilde \lambda,\rho)$ is the desired real Liouville domain.
\end{proof}

\subsection{Smooth quadrics in projective space}
We define a {\bf quadric} in projective space as the zeroset of a non-zero homogeneous quadratic polynomial.
Note that a homogeneous quadratic polynomial can always be written as $p(z)=z^t B z$, where $B$ is a symmetric matrix.
By Sylvester's theorem, we can assume that $B$ is diagonal.
We then easily see
\begin{lemma}
A quadric is smooth if and only if $B$ has maximal rank.
\end{lemma}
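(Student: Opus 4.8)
The plan is to reduce the lemma to a one-line computation of the gradient of $p$ along its zero cone. Throughout, I take ``smooth'' in the standard sense for a projective hypersurface: the quadric $Q=\{[z]\in\P^n:p(z)=0\}$ is smooth iff its affine cone $\widehat Q=\{z\in\C^{n+1}:p(z)=0\}$ is a submanifold of $\C^{n+1}$ away from the origin, which by the implicit function theorem is equivalent to $dp_z\neq 0$ for all $z\in\widehat Q\setminus\{0\}$. Writing $p(z)=z^tBz$ with $B$ symmetric and differentiating gives $\nabla p(z)=2Bz$, and since $p(z_0)=z_0^t(Bz_0)$, any $z_0\in\ker B$ automatically satisfies $p(z_0)=0$. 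Hence the singular locus of $Q$ is exactly the projectivization of $\ker B\setminus\{0\}$.

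Given this, both directions are immediate. If $B$ fails to have maximal rank, pick $0\neq z_0\in\ker B$: then $[z_0]\in Q$ and $dp_{z_0}=2Bz_0=0$, so $Q$ is singular. Conversely, if $B$ has maximal rank then $\ker B=\{0\}$, so $dp_z=2Bz\neq0$ for every $z\in\C^{n+1}\setminus\{0\}$, in particular along $\widehat Q\setminus\{0\}$, and $Q$ is smooth. For completeness one also notes that $Q$ is nonempty when $n\geq1$: by Sylvester's theorem $p$ is linearly equivalent to $\sum_{i=0}^n z_i^2$, which has, e.g., the projective zero $[1:\sqrt{-1}:0:\cdots:0]$.

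I do not expect any real obstacle; the content is elementary linear algebra. The only point that needs care is fixing the convention for ``smooth'' so that degenerate rank-one forms like $p=z_0^2$ count as singular (their reduced zero set is a hyperplane, but the scheme they cut out is non-reduced, equivalently $dp$ vanishes identically on the cone over the zero set), which is exactly what the scheme-theoretic, regular-level-set definition above records. With that convention the stated equivalence holds verbatim.
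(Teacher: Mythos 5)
Your proof is correct: the computation $\nabla p(z)=2Bz$ together with the affine-cone criterion for smoothness of a projective hypersurface identifies the singular locus with $\P(\ker B)$, which is exactly the standard argument, and your remark fixing the scheme-theoretic convention so that $p=z_0^2$ counts as singular is the right way to make the equivalence hold verbatim. The paper offers no proof at all (``We then easily see''), so your write-up simply supplies the intended elementary argument.
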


We have the following identification of the smooth projective quadric with an oriented Grassmannian.
\begin{lemma}
\label{lemma:quadric_symmetric_sp}
The smooth projective quadric given by
$$
Q^n=\{ [z_0,\ldots,z_{n+1}]\in \C \P^{n+1}~|~\sum_j z_j^2=0 \}
$$
is diffeomorphic to the symmetric space $Gr^+(2,n)\cong \SO(n+2)/\SO(2)\times \SO(n)$.
Furthermore, $\SO(n+2)$ acts transitively via biholomorphisms.
\end{lemma}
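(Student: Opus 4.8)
The plan is to construct an explicit $\SO(n+2)$-equivariant bijection $\Phi$ between $Q^n$ and the Grassmannian $Gr^+(2,n)$ of oriented $2$-planes in $\R^{n+2}$, and then to promote it to a diffeomorphism by exploiting the transitive group action on both sides.

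First I would separate real and imaginary parts of a homogeneous coordinate, writing $z=x+iy$ with $x,y\in\R^{n+2}$. The equation $\sum_j z_j^2=0$ then reads $\lvert x\rvert^2=\lvert y\rvert^2$ and $\langle x,y\rangle=0$; since $z\neq 0$ this forces $x$ and $y$ to be nonzero, orthogonal and of equal length, so $(x,y)$ is an orthogonal basis of a $2$-plane $P_{[z]}\subset\R^{n+2}$, which I orient so that $(x,y)$ is positive. Replacing $z$ by $\lambda z=re^{i\theta}z$ replaces $(x,y)$ by $(r\cos\theta\,x-r\sin\theta\,y,\ r\sin\theta\,x+r\cos\theta\,y)$, an automorphism of $P_{[z]}$ of determinant $r^2>0$; hence both $P_{[z]}$ and its orientation are independent of the representative, and $\Phi\colon Q^n\to Gr^+(2,n)$, $[z]\mapsto P_{[z]}$, is well defined. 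It is surjective because an oriented $2$-plane with positive orthonormal basis $(e_1,e_2)$ equals $\Phi([e_1+ie_2])$, and injective because if $\Phi([z])=\Phi([z'])$ then, after rescaling both representatives so that all four vectors are unit, $(x,y)$ and $(x',y')$ are positive orthonormal bases of the same oriented plane, hence differ by a rotation in $\SO(2)$, which is exactly multiplication by some $e^{i\theta}$ in $\C^{n+2}$, so $[z']=[z]$.

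Next I would bring in the group. Since $M^tM=\id$ for $M\in\SO(n+2)$ we have $(Mz)^t(Mz)=z^tz$, so the $\C$-linear extension of the standard action of $\SO(n+2)$ on $\R^{n+2}$ to $\C^{n+2}=\R^{n+2}\otimes_\R\C$ preserves the quadric equation. Thus $\SO(n+2)$ acts on $Q^n$ by restrictions of the projective-linear action on $\C\P^{n+1}$, in particular by biholomorphisms of the complex submanifold $Q^n$. Under $\Phi$ this is intertwined with the usual $\SO(n+2)$-action on oriented $2$-planes, which is transitive; hence $\SO(n+2)$ acts transitively on $Q^n$ by biholomorphisms, which proves the second assertion. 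The stabilizer of $[e_1+ie_2]$ consists of the $M\in\SO(n+2)$ with $M(e_1+ie_2)=\lambda(e_1+ie_2)$; comparing norms gives $\lvert\lambda\rvert=1$, so $M$ acts as a rotation on $\operatorname{span}(e_1,e_2)$, and since $\det M=1$ it restricts to an element of $\SO(n)$ on the orthogonal $\R^n$; conversely every such $M$ fixes the point. So the stabilizer is $\SO(2)\times\SO(n)$, the orbit map gives a diffeomorphism $\SO(n+2)/\SO(2)\times\SO(n)\cong Q^n$, and the same holds for $Gr^+(2,n)$; equivalently, the equivariant continuous bijection $\Phi$ between two compact homogeneous spaces of $\SO(n+2)$ is automatically a diffeomorphism. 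This yields $Q^n\cong Gr^+(2,n)\cong\SO(n+2)/\SO(2)\times\SO(n)$.

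All of this is elementary linear algebra; the only steps needing a moment's care are the well-definedness of $\Phi$ — tracking how the $\C^\times$-rescaling of homogeneous coordinates acts inside the $2$-plane so that both the plane and its orientation are preserved — and the dual statement in the stabilizer computation. I would deliberately organize the argument so that the passage from bijection to diffeomorphism goes through the compact transitive action rather than through a direct analysis of $d\Phi$, which keeps the proof short and avoids any genuine difficulty.
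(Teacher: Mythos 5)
Your proposal is correct and follows essentially the same route as the paper: both rest on the identity $\sum_j z_j^2=\Vert x\Vert^2-\Vert y\Vert^2+2i\langle x,y\rangle$ to identify points of $Q^n$ with oriented $2$-planes, and on the observation that the complexified linear $\SO(n+2)$-action preserves the quadric and acts by biholomorphisms. You merely write the correspondence in the opposite direction and supply the details (well-definedness under $\C^\times$-rescaling, the stabilizer $\SO(2)\times\SO(n)$, and smoothness via the homogeneous-space structure) that the paper leaves implicit.
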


\begin{proof}
For the first part, we exhibit the diffeomorphism
\[
\begin{split}
Gr^+(2,n+2) & \longrightarrow Q^n \\
span(x,y) & \longmapsto x+iy.
\end{split}
\]
Here $x,y\in \R^{n+2}$ form an orthonormal basis of the $2$-plane they span. We use that $\sum_j z_j^2=\Vert x \Vert^2-\Vert y \Vert^2+2i\langle x,y \rangle$.
To see that $\SO(n+2)$ acts by biholomorphisms, just observe that
\[
\begin{split}
\SO(n+2) \times Q^n \longrightarrow Q^n\\
(A,[x+iy]) & \longmapsto [Ax+iAy]=[A(x+iy)].
\end{split}
\]
\end{proof}

By an {\bf affine quadric} we mean the zeroset of a non-zero quadratic polynomial in $\C^{n+1}$.
Away from possible singular points an affine quadric inherits a symplectic structure as a complex submanifold of a K\"ahler manifold.
It is well-known, see \cite[Exercise 6.20]{McDuff_Salamon:introduction}, that a smooth affine quadric is symplectomorphic to $T^*S^n$ with its canonical symplectic structure.
\begin{lemma}
There is a symplectomorphism
\[
\begin{split}
(V=\{ (z_0,\ldots,z_n)\in \C^{n+1}~|~\sum_j {z_j}^2=1 \},\omega_0) & \longrightarrow (T^*S^n,\omega_{can})\subset T^*\R^{n+1} \\
z=x+iy & \longmapsto (\frac{x}{\Vert x\Vert },\Vert x\Vert y).
\end{split}
\]
\end{lemma}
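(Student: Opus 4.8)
The plan is to verify directly that the stated map $\Phi(z)=(x/\Vert x\Vert,\ \Vert x\Vert\, y)$, where $z=x+iy$ with $x=\re z$ and $y=\im z$, is a diffeomorphism onto $T^*S^n$ and pulls the canonical symplectic form back to $\omega_0$ restricted to the affine quadric $V$. First I would rewrite the defining equation in real terms: $\sum_j z_j^2=1$ is equivalent to the pair of conditions $\Vert x\Vert^2-\Vert y\Vert^2=1$ and $\langle x,y\rangle=0$. In particular $\Vert x\Vert^2=1+\Vert y\Vert^2\ge 1$, so $x$ never vanishes on $V$; hence $\Phi$ is smooth, and since $x/\Vert x\Vert$ has unit norm while $\langle x/\Vert x\Vert,\Vert x\Vert\, y\rangle=\langle x,y\rangle=0$, the image of $\Phi$ indeed lies in $T^*S^n=\{(q,p)\in T^*\R^{n+1}:\Vert q\Vert=1,\ \langle q,p\rangle=0\}$.

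Next I would produce the inverse explicitly. Given $(q,p)\in T^*S^n$ one looks for $x=rq$ and $y=p/r$ with $r=\Vert x\Vert>0$; the constraint $\Vert x\Vert^2-\Vert y\Vert^2=1$ then becomes $r^4-r^2-\Vert p\Vert^2=0$, a quadratic equation in $r^2$ whose unique positive root $r^2=\frac{1}{2}(1+\sqrt{1+4\Vert p\Vert^2})$ depends smoothly on $p$. One checks that $z:=rq+i\,p/r$ satisfies both real equations defining $V$ and that $(q,p)\mapsto z$ is a two-sided inverse of $\Phi$; therefore $\Phi$ is a diffeomorphism. Since $V$ is a smooth complex hypersurface of $\C^{n+1}$ (the differential of $\sum_j z_j^2$ vanishes only at the origin, which is not on $V$), the form $\omega_0|_V$ is symplectic, as is $\omega_{can}$ on $T^*S^n$, so it remains only to match the two forms.

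For the symplectic statement I would compute the pullback of the tautological $1$-form $\lambda_{can}=\sum_j p_j\,dq_j$. Differentiating $q_j=x_j/\Vert x\Vert$ and using $p_j=\Vert x\Vert\,y_j$ one obtains
$$\Phi^*\lambda_{can}=\sum_j y_j\,dx_j-\frac{\langle x,y\rangle}{\Vert x\Vert^2}\sum_k x_k\,dx_k .$$
On $V$ the function $\langle x,y\rangle$ vanishes identically, so the correction term disappears and $\Phi^*\lambda_{can}=(\sum_j y_j\,dx_j)|_V$. Since $\sum_j y_j\,dx_j=d(\sum_j x_j y_j)-\sum_j x_j\,dy_j$ and $\sum_j x_j y_j$ is identically zero on $V$, on $V$ this agrees, up to a $1$-form that is exact there, with $-\sum_j x_j\,dy_j$, which is a primitive of $\pm\omega_0$. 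Taking the exterior derivative then gives $\Phi^*\omega_{can}=\omega_0|_V$ (with the conventions fixed below), so $\Phi$ is a symplectomorphism.

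There is no serious analytic obstacle here: the map and its inverse are explicit, and the symplectic claim reduces to one short primitive computation. The only points demanding care are the bookkeeping of the real equations of the quadric --- it is exactly the identity $\langle x,y\rangle\equiv 0$ on $V$ that turns $\Phi^*\lambda_{can}$ into a genuine primitive of $\omega_0|_V$ --- and fixing the sign conventions for $\lambda_{can}$, $\omega_{can}$ and $\omega_0$ consistently (those of \cite{McDuff_Salamon:introduction}) so that no spurious sign survives.
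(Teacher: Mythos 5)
Your proof is correct. The paper itself gives no argument for this lemma --- it is stated as a well-known fact with a pointer to Exercise 6.20 of \cite{McDuff_Salamon:introduction} --- so there is no ``paper proof'' to compare against; your direct verification is exactly the standard one and is complete. The key identities all check out: $\sum_j z_j^2=1$ is equivalent to $\Vert x\Vert^2-\Vert y\Vert^2=1$ and $\langle x,y\rangle=0$, the quadratic $r^4-r^2-\Vert p\Vert^2=0$ has a unique positive root depending smoothly on $p$, and the correction term in $\Phi^*\lambda_{can}$ is killed precisely by $\langle x,y\rangle\equiv 0$ on $V$. The one place you leave slightly open is the sign: with the convention $\omega_{can}=-d\lambda_{can}$ of \cite{McDuff_Salamon:introduction} one gets $d\bigl(\sum_j y_j\,dx_j\bigr)=-\omega_0$ and hence $\Phi^*\omega_{can}=\omega_0|_V$ on the nose, so the $\pm$ you carry resolves to $+$ and nothing spurious survives. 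A fully written-up version should state that convention explicitly rather than deferring it, but this is bookkeeping, not a gap.
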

The singular affine quadric appearing in the following lemma is also of interest.
\begin{lemma}
The symplectization of $(ST^*S^n,\lambda_{can})$ is symplectomorphic to 
$$
V_0=\{ (z_0,\ldots,z_n)\in \C^{n+1}~|~\sum_j {z_j}^2=0 \} \setminus \{ 0 \}
.
$$
In addition, the standard complex structure $i$ is an SFT-like complex structure for the symplectization.
\end{lemma}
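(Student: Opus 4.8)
The plan is to realize $V_0\setminus\{0\}$ as a \emph{symplectic cone} whose link is $ST^*S^n$. Since $0$ is the only critical point of $z\mapsto\sum_j z_j^2$, the set $V_0\setminus\{0\}$ is a smooth complex hypersurface of $(\C^{n+1},i)$, so $i$ restricts to it. On $(\C^{n+1},\omega_0)$ the radial vector field
$$
Z=\tfrac12\sum_j\Bigl(x_j\tfrac{\partial}{\partial x_j}+y_j\tfrac{\partial}{\partial y_j}\Bigr),\qquad
\lambda_{std}:=\iota_Z\omega_0=\tfrac12\sum_j(x_j\,dy_j-y_j\,dx_j),
$$
is a Liouville vector field, and since $V_0$ is a cone it is preserved by the flow $z\mapsto e^{s/2}z$ of $Z$; hence $Z$ is tangent to $V_0$ and $\lambda:=\lambda_{std}|_{V_0\setminus\{0\}}$ is a Liouville form for $\omega_0|_{V_0\setminus\{0\}}$. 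For a fixed radius $c>0$ the link $Y:=V_0\cap\{|z|=c\}$ is transverse to the (complete) flow of $Z$ and carries the contact form $\lambda|_Y$, and since $\lambda_{std}$ is quadratic in the coordinates this flow gives a diffeomorphism $\R\times Y\to V_0\setminus\{0\}$ pulling $\lambda$ back to $e^{t}(\lambda|_Y)$; thus $(V_0\setminus\{0\},\omega_0)$ is the symplectization of $(Y,\lambda|_Y)$.

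The second step identifies $(Y,\lambda|_Y)$ with $ST^*S^n$ and its canonical contact form, up to a positive constant. A point of $Y$ is $z=x+iy$ with $x,y\in\R^{n+1}$ orthogonal of common norm $c/\sqrt2$, and $x+iy\mapsto(q,p):=\bigl(\tfrac{\sqrt2}{c}y,\tfrac{\sqrt2}{c}x\bigr)$ is a diffeomorphism onto the Stiefel manifold of orthonormal $2$-frames in $\R^{n+1}$, which is the standard model of $ST^*S^n$ once $p$ is regarded as a covector via the round metric. Pulling back $\lambda_{can}=\sum_i p_i\,dq_i$, and using that on $Y$ one has $\sum_i x_i\,dx_i=\sum_i y_i\,dy_i=0$ and $\sum_i(x_i\,dy_i+y_i\,dx_i)=0$ (because $\langle x,x\rangle$, $\langle y,y\rangle$ are constant and $\langle x,y\rangle\equiv0$ on $V_0$), one gets $\lambda|_Y=\tfrac12\sum_i(x_i\,dy_i-y_i\,dx_i)|_Y=\sum_i x_i\,dy_i|_Y$, which is a positive constant times the pullback of $\lambda_{can}$. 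Choosing $c$ suitably (or rescaling $\lambda_{can}$, harmless by the remark after Theorem~\ref{symper}), this identifies the symplectization of $(ST^*S^n,\lambda_{can})$ with $(V_0\setminus\{0\},\omega_0)$.

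Finally I would check the three SFT-like conditions for $i$, taking $X=Z$ as the Liouville vector field and $Y$ as the boundary hypersurface. Condition (3) holds because the cylindrical end is all of $V_0\setminus\{0\}$ and its Liouville flow $z\mapsto e^{s/2}z$ is $\C$-linear, hence holomorphic. Condition (1) holds because the contact distribution is a complex subspace: along $Y$ one has $\lambda_{std}=\tfrac12\langle iz,\cdot\rangle$, so $\xi=\ker(\lambda|_Y)\cap TY=TV_0\cap\{v:\langle z,v\rangle=\langle iz,v\rangle=0\}=TV_0\cap(\C z)^{\perp}$, an intersection of two complex subspaces of $\C^{n+1}$. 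For condition (2), $iX=iZ$ is tangent to $Y$ and lies in the complex line $\C z$; since $d(\lambda|_Y)=\omega_0|_{TY}$ has one-dimensional kernel spanned by the $S^1$-rotation generator $iz|_Y$, the vector $iX$ is a positive multiple of the Reeb vector field $R$, and the constant in the identification above may be fixed so that $iX=R$, whence $iR=i(iX)=-X$.

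The step I expect to be most delicate is the simultaneous choice of constants: one must pick the radius $c$ (equivalently, rescale $\lambda_{can}$ and reparametrize the symplectization coordinate) so that $\lambda|_Y$ matches $\lambda_{can}$ \emph{and} $iX=R$ holds exactly, since the symplectization structure only pins down the contact form up to a positive scalar whereas $JX=R$ is sensitive to it. The remaining ingredients --- completeness of and transversality to the Liouville flow, the identity $\lambda_{std}|_Y=\sum_i x_i\,dy_i|_Y$, and the identification of $Y$ with the Stiefel manifold $ST^*S^n$ --- are routine.
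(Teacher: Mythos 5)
The paper states this lemma without proof, so there is no argument of the authors' to measure yours against; your cone--link strategy is the standard one, and the individual verifications are correct: $Z=\tfrac12 z$ is tangent to the complex cone $V_0$ and its flow $z\mapsto e^{s/2}z$ trivializes $V_0\setminus\{0\}$ as $\R\times Y$; on $TV_0$ one has $\sum_j(x_j\,dy_j+y_j\,dx_j)=0$, whence $\lambda_{std}|_{TV_0}=\sum_j x_j\,dy_j$; the link is the Stiefel manifold of orthonormal $2$-frames, i.e.\ $ST^*S^n$; $\xi=TV_0\cap(\C z)^{\perp}$ is a complex subspace; $\ker\big(\omega_0|_{TY}\big)=\R\cdot iz$; and $i$ commutes with the $\C$-linear Liouville flow.

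The point you flag as delicate is, however, a genuine conflict rather than a matter of care. Write $Y_c=V_0\cap\{|z|=c\}$ and let $\Phi(x+iy)=\bigl(\tfrac{\sqrt2}{c}y,\tfrac{\sqrt2}{c}x\bigr)$ be your identification with $ST^*S^n$. Then $\lambda_{std}|_{Y_c}=\sum_j x_j\,dy_j=\tfrac{c^2}{2}\,\Phi^*\lambda_{can}$, while $\lambda_{std}(iz)=\tfrac12|z|^2$ gives $R=\tfrac{2}{c^2}\,iz$ and $iX=iZ=\tfrac12 iz=\tfrac{c^2}{4}R$. So matching the contact forms forces $c^2=2$, at which level $iX=\tfrac12 R$ (and $iR=-2X$), whereas $iX=R$ forces $c^2=4$, at which level the induced form is $2\,\Phi^*\lambda_{can}$. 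No choice of $c$ --- nor, by homogeneity of both $\lambda_{std}$ and $i$ under the radial flow, of any other starshaped level set --- achieves both at once, so the clause ``the constant in the identification above may be fixed so that $iX=R$'' cannot be implemented compatibly with your first normalization. The correct output of your argument is that $i$ is SFT-like for the symplectization of $(ST^*S^n,2\lambda_{can})$, i.e.\ the lemma holds after rescaling $\lambda_{can}$ by $2$; this is harmless for the paper's purposes by the remark following Theorem~\ref{symper}, but it should be stated explicitly rather than absorbed into a choice of constant.
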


\subsection{Naive Gromov-Witten invariants of quadrics}
We consider a smooth quadric $Q^n$ given as the zero locus of the symmetric bilinear form $B$.
The Lefschetz hyperplane theorem implies that for $n>2$, we have $H_2(Q^n;\Z)\cong \Z$, see \cite[Example 4.27]{McDuff_Salamon:introduction}.
Moreover, this homology group is generated by a line $L$, by which we mean a map of the form $[\lambda:\mu]\in \C \P^1 \mapsto \lambda p+\mu q$, where $p,q\in Q^n\subset \C \P^{n+1}$ (so $B(p,p)=B(q,q)=0$) and $B(p,q)=0$.
The quadric $Q^2$ in $4$-dimensions is diffeomorphic to $S^2 \times S^2$, so $H_2(Q^2;\Z)\cong \Z^2$, and there are two types of lines, distinguished by their homology class.
We will equip $Q^n$ with its natural complex structure $J_0$.

Let $Hol(J_0,[L])$ denote the space of $J_0$-holomorphic maps from $\C \P^1$ to $Q^n$ representing the homology class $[L]$.
Write ${\mathcal M}(J_0,[L])$ for the moduli space of $J_0$-holomorphic curves with homology class $[L]$.
We have
$$
{\mathcal M}(J_0,[L])=Hol(J_0,[L])/Aut(\C \P^1).
$$
We will compute some Gromov-Witten invariants by ``naive counting'', \cite{RT:counting}.
To show that this works, we needs to establish regularity of $J_0$.

\subsection{Moduli space and regularity}
Let $L$ be a line on a smooth projective quadric with primitive homology class $[L]\in H_2(Q^n;\Z)$.
We linearize the Cauchy-Riemann equations at a parametrization of $L$ given by $u:\C \P^1\to Q^n$.
\begin{lemma}
The linearized operator at $u$ is surjective.
In particular, the space of holomorphic maps $Hol(J_0,[L])$ in $Q^n$ is a smooth manifold of dimension $\dim Hol(J_0,[L])=2n+2n$. 
\end{lemma}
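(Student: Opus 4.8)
The plan is to identify the linearized operator at $u$ with a Dolbeault operator on a holomorphic bundle over $\C\P^1$, and to read off its surjectivity from the Grothendieck splitting of that bundle.

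\emph{Reduction to a cohomology vanishing.} Since $J_0$ is integrable and $u\colon\C\P^1\to Q^n$ is $J_0$-holomorphic, the map $u$ is a biholomorphism onto a line $L\subset Q^n$, and the vertical differential of the Cauchy--Riemann operator at $u$ has the same kernel and cokernel as the Dolbeault operator $\bar\partial$ of the holomorphic vector bundle $E:=u^*TQ^n$ over $\C\P^1$ (this is the standard fact that over an integrable target the linearization at a holomorphic curve is a $\bar\partial$-operator, see \cite{McDuff_Salamon:holomorphic}). Hence $D_u$ is surjective if and only if $H^1(\C\P^1,E)=0$. By Grothendieck's theorem $E\cong\bigoplus_{i=1}^{n}\mathcal O(a_i)$, and $H^1(\C\P^1,E)=0$ holds exactly when $a_i\ge-1$ for all $i$, so it suffices to show that no summand of $E$ has negative degree.

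\emph{Computing $u^*TQ^n$.} Viewing $L\subset Q^n\subset\C\P^{n+1}$, the normal bundle exact sequences read
$$
0\longrightarrow T\C\P^1\longrightarrow E\longrightarrow u^*N_{L/Q^n}\longrightarrow 0,\qquad
0\longrightarrow N_{L/Q^n}\longrightarrow \mathcal O(1)^{\oplus n}\longrightarrow\mathcal O(2)\longrightarrow 0,
$$
where $N_{L/\C\P^{n+1}}\cong\mathcal O(1)^{\oplus n}$, $N_{Q^n/\C\P^{n+1}}|_L\cong\mathcal O(2)$ (a quadric is a degree-$2$ hypersurface), and $T\C\P^1\cong\mathcal O(2)$. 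Tensoring the second sequence with $\mathcal O(-2)$ and passing to global sections gives $H^0(N_{L/Q^n}(-2))=0$, so in the Grothendieck splitting of $N_{L/Q^n}$ every summand has degree at most $1$; since $N_{L/Q^n}$ has rank $n-1$ and degree $\langle c_1(TQ^n),[L]\rangle-2=n-2$, this forces $N_{L/Q^n}\cong\mathcal O(1)^{\oplus(n-2)}\oplus\mathcal O$. Substituting into the first sequence and using that $H^1$ vanishes for $\mathcal O(2)$ and for $\mathcal O(1)^{\oplus(n-2)}\oplus\mathcal O$, we get $H^1(\C\P^1,E)=0$, i.e.\ $D_u$ is surjective. (Alternatively one could invoke convexity of the homogeneous space $Q^n$ from Lemma~\ref{lemma:quadric_symmetric_sp}, which gives $H^1(\C\P^1,f^*TQ^n)=0$ for every holomorphic $f$.)

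\emph{Dimension count and the main difficulty.} Given surjectivity of $D_u$, the implicit function theorem shows that $Hol(J_0,[L])$ is a smooth manifold near $u$ of dimension equal to the Fredholm index of $D_u$, namely $n\,\chi(\C\P^1)+2\langle c_1(TQ^n),[L]\rangle=2n+2n$. There is no real obstacle here; the only points requiring care are the identification in the first step of the linearized operator with $\bar\partial$ on $u^*TQ^n$, and checking that the normal bundle computation is insensitive to which of the two rulings $L$ belongs to when $n=2$ — which it is, since the exact sequences used do not refer to the ruling.
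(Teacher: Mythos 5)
Your argument is correct and follows the same overall strategy as the paper's first proof: reduce surjectivity of the linearization to the vanishing of $H^1(\C\P^1,u^*TQ^n)$ via the normal bundle sequence $0\to TL\to TQ^n|_L\to \nu_L\to 0$, identify $\nu_L\cong\mathcal O(1)^{\oplus(n-2)}\oplus\mathcal O$, and conclude by the vanishing of $H^1(\C\P^1,\mathcal O(k))$ for $k\ge -1$; your parenthetical alternative via the transitive holomorphic $\SO(n+2)$-action is exactly the paper's second argument. The one place where you genuinely diverge is the computation of the splitting type of $\nu_L$: the paper places $L$ in a tower of smooth quadrics $L\subset Q^2\subset\cdots\subset Q^n$ and reads off the splitting from $\nu_{Q^k}(Q^{k-1})\cong\mathcal O(1)$ and the triviality of $\nu_{Q^2}(L)$, whereas you use the conormal sequence $0\to N_{L/Q^n}\to\mathcal O(1)^{\oplus n}\to\mathcal O(2)\to 0$ inside $\C\P^{n+1}$ together with the degree count $\deg N_{L/Q^n}=n-2$ to pin down the summands. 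Your route is more algebraic and avoids having to exhibit the tower of sub-quadrics, while the paper's is more geometric; both are complete, and your closing remark that the argument is insensitive to the choice of ruling when $n=2$ is a point the paper leaves implicit.
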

We give two arguments for this statement.

\subsubsection{Regularity via sheafs and splitting of the normal bundle}
In the language of sheafs, triviality of the cokernel is equivalent to vanishing of the sheaf cohomology group $H^1(L,\mathcal T Q^n|_L)$ (cf.~the statement of Riemann-Roch).
We have the short exact sequence of sheafs
$$
0 \longrightarrow \mathcal T L \longrightarrow \mathcal T Q^n|_L \longrightarrow \nu_L \longrightarrow 0,
$$
where $\nu_L$ is the sheaf of germs of holomorphic sections of the normal bundle of $L$.
A piece of the corresponding long exact sequence in cohomology looks like
$$
H^1(L,\mathcal T L) \longrightarrow H^1(L,\mathcal T Q^n|_L) \longrightarrow H^1(L,\nu_L).
$$
It is a well-known classical fact that $H^1(\C\P^1,\mathcal O(k)\,)=0$ for $k\geq -1$ (a generalization of this formula is known as the Bott formula, see \cite[Chapter 1]{OSS:cpx_vb}), so we see directly that $H^1(L,\mathcal T L )=0$ as $\mathcal T L\cong \mathcal O(2)$.
For the normal bundle, note that a line $L$ in a smooth quadric $Q^n$ is always contained in a tower of smooth quadrics of the form
$$
L\subset Q^2 \subset Q^3 \subset \ldots \subset Q^n.
$$
The normal bundle $\nu_{Q^k}(Q^{k-1})$ is isomorphic to $\mathcal O(1)$, and the normal bundle $\nu_{Q^2}(L)$ is trivial, so $\nu_L$ splits as
$$
\mathcal O(1)^{n-2}\oplus \mathcal O.
$$
By the earlier mentioned Bott formula $H^1(L,\nu_L)\cong H^1(\C\P^1,\mathcal O(1)\,)^{\oplus n-2}\oplus H^1(\C\P^1,\mathcal O\,)=0$, so we conclude that $H^1(L,\mathcal T Q^n|_L)=0$.

\subsubsection{Regularity via holomorphic transitive actions}
Lemma~\ref{lemma:quadric_symmetric_sp} tells us that we have a holomorphic transitive action on $Q^n$, so by \cite[Proposition 7.4.3]{McDuff_Salamon:holomorphic}, every holomorphic sphere is regular, and the claim of the Lemma follows.

\subsection{Lines through a point}
Now consider the evaluation map
\[
\begin{split}
ev:Hol(J_0,[L]) \times_{\Aut(\C \P^1)} \C \P^1 & \longrightarrow Q^n \\
[u,z] & \longmapsto u(z).
\end{split}
\]
By Sard's theorem we find a regular value $p$ of $ev$, and in fact, since $SO(n+2)$ acts transitively on $Q^n$, every value is regular.
Define the moduli space of lines through $p$ as $\mathcal M_p=ev^{-1}(p)$.

Geometrically, we can describe $\mathcal M_p$ as follows.
If $L=pq$ is a line through $p$ and $q$ that is completely contained in $Q^n$, then $B(\lambda p+\mu q,\lambda p+\mu q)=0$ for all $[\lambda:\mu]\in \C \P^1$.
This gives a quadratic equation in $\lambda$ and $\mu$, which should vanish identically, so by looking at the coefficients we find
$$
B(p,p)=0,\quad B(p,q)=0,\quad B(q,q)=0.
$$
As $p$ and $q$ lie on $Q^n$, we automatically have $B(p,p)=0=B(q,q)$.
The remaining equation defines a hyperplane in $\C \P^{n+1}$, namely the ``geometric tangent plane''
$$
P:=\{ z\in \C \P^{n+1}~|~B(p,z)=0 \}
.
$$
Since every line through $p$ intersects the quadric at infinity, given by $Q_\infty=\{ z=[z_0:\ldots:z_{n}:0]~|~z\in Q^n \}$, we can identify the moduli space of lines through $p$ with $\mathcal M_p=Q_\infty \cap P$.

To obtain a Gromov-Witten invariant, we will consider lines through $p$ going through an additional cycle $C$.
First define
\[
\begin{split}
ev:Hol(J_0,[L]) \times_{\Aut(\C \P^1)} \C \P^1 \times \C \P^1 & \longrightarrow Q^n\times Q^n \\
[u;z_1,z_2] & \longmapsto (u(z_1),u(z_2)\,).
\end{split}
\]
A dimension count tells us that $C$ should be a $2$-cycle if we want $ev^{-1}(\{ p \} \times C )$ to consist of points.
Hence we take $C$ to be a line (which is of course a smooth submanifold) in $Q_\infty$ which transversely intersects ${\mathcal M}_p$, regarded as a subset in $Q_\infty$, in a point $q_0$.
We get a unique element in ${\mathcal M}(J_0,[L]) \times_{\Aut(\C \P^1)} \C \P^1 \times \C \P^1$ which maps to $(p,q)\in Q^n \times Q^n$, and we may represent this element by $(u;[0:1],[1:0])$.

To check that the evaluation map is transverse to $\{ p \} \times C$, we observe that $C$ is transverse to the set
$$
Cone(p, \mathcal M_p)=\{ q\in Q^n~|~q \text{ lies on the line from }p \text{ to some point in }\mathcal M_p\subset Q_\infty \subset Q^n \}
$$
First we show that vectors of the form $(v,0)\in T_p Q^n \times T_{q_0} Q^n$ lie in the image of $T_{[u;[0:1],[1:0]\,]}ev$.
Indeed, put $p_s:=\exp_p(s v)$, and follow the above procedure to define $\mathcal M_{p_s}$.
For small $s$ we find a unique intersection point $q_s:=\mathcal M_{p_s}\cap C$.
Therefore we find a variation $(u_s,[0:1],[1:0])$ which is mapped to $(p_s,q_s)$ under $ev$. Note here that the curve $q_s$ is tangent to $C$. 

To see that a vector of the form $(0,w)$ also lies in the image of $T_{[u;[0:1],[1:0]\,]}ev$, we first note that we can assume that $w$ lies in the tangent space to $Cone(p, \mathcal M_p)$ since the normal to $Cone(p, \mathcal M_p)$ is tangent to $C$.
The curve $\tilde q_s:=\exp_p(s w)$ lies in $Cone(p, \mathcal M_p)$, so by definition of this cone, we find a line from $p$ to $\tilde q_s$.
Hence we find a variation $(u_s,[0:1],[1:z_s])$ which maps to $(p,\tilde q_s)$.

We conclude
\begin{prop}
The $2$-point Gromov-Witten invariant $GW^{Q^n}_{[L]}([p],[C])$ equals $1$.
\end{prop}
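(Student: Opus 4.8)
The plan is to evaluate this invariant by \emph{naive counting}, in the sense of \cite{RT:counting}: under suitable transversality and compactness hypotheses the Gromov--Witten number $GW^{Q^n}_{[L]}([p],[C])$ equals the signed count of points in $ev^{-1}(\{p\}\times C)$, where $ev$ is the two-point evaluation map introduced above. Two of the three needed ingredients are already in hand: $J_0$ is regular for holomorphic spheres in class $[L]$, so $\mathcal M(J_0,[L])$ and the associated evaluation maps are smooth of the expected dimension, and $ev$ is transverse to $\{p\}\times C$ for our choice of $C$. The dimension count recorded above then shows that $ev^{-1}(\{p\}\times C)$ is a $0$-dimensional manifold.

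Next I would establish compactness of $ev^{-1}(\{p\}\times C)$, i.e.~that no bubbling occurs. A genus-zero stable map in class $[L]$ can degenerate only by distributing $[L]$ over several spherical components, each of positive symplectic area. For $n\geq 3$ one has $H_2(Q^n;\Z)\cong\Z$ generated by $[L]$, so $[L]$ cannot be written as a sum of two classes of positive area; for $Q^2\cong S^2\times S^2$ the two line classes generate the two $\Z$-summands and are likewise indecomposable in this sense. Hence no nodal configurations arise, $ev^{-1}(\{p\}\times C)$ is a compact $0$-manifold, and the naive count is legitimate and agrees with the genuine invariant (cf.~\cite[Chapter 7]{McDuff_Salamon:holomorphic}).

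It then remains to count, with signs. By the geometric description above, lines through $p$ are parametrised by $\mathcal M_p=Q_\infty\cap P$ with $P=\{z\mid B(p,z)=0\}$, and $C$ was chosen to be a line in $Q_\infty$ meeting $\mathcal M_p$ transversely in a single point $q_0$; a line through $p$ that also meets $C$ must then be the line $pq_0$, so $ev^{-1}(\{p\}\times C)$ consists of exactly one point, represented by $(u;[0:1],[1:0])$. Since $J_0$ is integrable and $Q^n$, the line, the hyperplane $P$, the cycle $C$ and the map $ev$ are all holomorphic, this point carries the canonical complex orientation and hence contributes $+1$. I expect the only real point of care to be the compactness/no-bubbling step; once primitivity of $[L]$ is invoked, the rest reduces to the already-proven regularity and transversality statements together with the automatic positivity of complex intersections, yielding $GW^{Q^n}_{[L]}([p],[C])=1$.
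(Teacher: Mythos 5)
Your argument matches the paper's: it relies on the regularity of $J_0$ and the transversality of the two-point evaluation map established just before the proposition, identifies the unique solution as the line $pq_0$ through the single transverse intersection point $q_0\in C\cap\mathcal M_p$, and assigns it sign $+1$ from the complex orientation. The only difference is that you make the no-bubbling step explicit via indecomposability of $[L]$, which the paper leaves implicit here (invoking it only later in the proof of the main theorem); that is a welcome clarification, not a divergence.
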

We remind the reader that $H_2(Q^2)\cong \Z\oplus \Z$, and there are two distinct homology classes $[L]$ represented by a line in this case.
We collect the above results in the following theorem.
\begin{thm}
The projective quadric $Q^n$ admits a decoration by $\mathcal D=(Q^{n-1},[L],C)$, where $[L]$ is the homology class of a line and $C$ is the submanifold described above.
\end{thm}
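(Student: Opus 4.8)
The plan is to check, clause by clause, that the triple $\mathcal D=(Q^{n-1},[L],C)$ satisfies every requirement in the definition of a decoration of $(Q^n,\omega)$. Here $\omega$ is the restriction to $Q^n$ of the Fubini--Study form of $\C\P^{n+1}$, normalized so that a projective line has symplectic area $1$; the primitive hypersurface is $Q^{n-1}$, realized as the hyperplane section $Q_\infty\subset Q^n$; the class is $A=[L]$, the line class; and $S=C\subset Q_\infty$ is the line constructed in the preceding subsection. Concretely I must verify: (a) $[\omega]$ is integral and primitive; (b) $Q^{n-1}$ is a primitive symplectic hypersurface; (c) $[L]$ is an indecomposable class in $\mathrm{im}(h)$; (d) $[L]\circ[Q^{n-1}]=1$; and (e) $GW_{[L]}([C],[p])$ is odd.

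For (a)--(c): since $Q^n$ is simply connected, the Hurewicz map $h\colon\pi_2(Q^n)\to H_2(Q^n;\Z)$ is an isomorphism, so $[L]\in\mathrm{im}(h)$ and $H^2(Q^n;\Z)$ is torsion free. When $n>2$ one has $H_2(Q^n;\Z)\cong\Z$, generated by $[L]$ (Lefschetz hyperplane theorem, as recalled above), and since $L$ is a projective line of $\C\P^{n+1}$ we get $\langle[\omega],[L]\rangle=1$; hence $[\omega]$ is a generator of $H^2(Q^n;\Z)\cong\Z$, so it is integral and primitive. The hyperplane section $Q_\infty\cong Q^{n-1}$ is a complex, hence symplectic, hypersurface, and it is Poincar\'e dual to the hyperplane class, i.e.~to $[\omega]$, so it is a primitive symplectic hypersurface. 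Indecomposability of $[L]$ is immediate: a decomposition $[L]=B_1+B_2$ inside $\mathrm{im}(h)=\Z\cdot[L]$ with $\langle[\omega],B_1\rangle,\langle[\omega],B_2\rangle>0$ would express $1$ as a sum of two positive integers. The case $n=2$ is handled the same way through $Q^2\cong S^2\times S^2$, $H_2(Q^2;\Z)\cong\Z^2$ and the split form: both ruling classes have area $1$, the class of $Q_\infty=Q^1$ is the primitive integral class $(1,1)$, and a decomposition of a ruling class into two classes of positive area would force $0<b_1+b_2<1$ with $b_1+b_2\in\Z$.

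For (d): a generic hyperplane $H$ of $\C\P^{n+1}$ meets $L$ in exactly one point, which lies on $Q^n$ and hence on $Q^{n-1}=Q^n\cap H$, so $[L]\circ[Q^{n-1}]=1$. For (e): by the regularity lemma $J_0$ is regular for curves in the class $[L]$, so the naive count computes the genus-zero Gromov--Witten invariant of~\cite{McDuff_Salamon:holomorphic}, and the Proposition proved above gives $GW^{Q^n}_{[L]}([p],[C])=1$; by the symmetry of Gromov--Witten invariants in their point constraints, $GW_{[L]}([C],[p])=1$, which is odd. Since $C$ is a line, hence a smooth submanifold of $Q_\infty$, the definition is verified in full and $\mathcal D=(Q^{n-1},[L],C)$ is a decoration of $Q^n$.

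The only step beyond bookkeeping is (e): the identification of the naive count with the honest invariant relies on surjectivity of the linearized Cauchy--Riemann operator along lines (the regularity lemma), on compactness of the cut-down moduli space of lines through $p$ meeting $C$ so that the count is finite and unambiguous, and on the transversality of the two-point evaluation map to $\{p\}\times C$, which was the content of the cone computation. Everything else --- integrality and primitivity of $[\omega]$, the intersection number in (d), and the $n=2$ bookkeeping --- is routine.
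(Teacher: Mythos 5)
Your proposal is correct and follows essentially the same route as the paper: the theorem is obtained by collecting the preceding results (regularity of $J_0$ along lines, the description of the moduli space of lines through a point, the transversality of the two-point evaluation map, and the resulting computation $GW^{Q^n}_{[L]}([p],[C])=1$), which is exactly what your step (e) does. The only difference is that you spell out the routine verifications (a)--(d) of integrality and primitivity of $[\omega]$, primitivity of the hypersurface, indecomposability of $[L]$, and the intersection number, which the paper leaves implicit in the phrase ``we collect the above results''; this is a welcome but not substantively different addition.
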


\begin{rem}
It is clear that the projective quadric has many anti-symplectic involutions.
For instance, we can compose conjugation with swapping coordinates.
\end{rem}

\section{Existence of invariant curves}
Complex conjugation on $\C \P^1$ defines an anti-symplectic involution $R_0 \colon \mathbb{CP}^1 \to \mathbb{CP}^1$, namely
$$
\rho_0[z_0:z_1]=[\bar{z}_0:\bar{z}_1].
$$
Now pick an $\omega$-compatible almost complex structure $J$ on $TM$ which is anti-invariant under $\rho$, so
$$\rho^* J=-J.$$
Denote the space of parametrized $J$-holomorphic maps from $\mathbb{CP}^1$ to $M$  by $Hol(J)$.
We define an involution on this space,
\[
\begin{split}
I\colon Hol(J) & \longrightarrow Hol(J) \\
u & \longmapsto \rho \circ u \circ \rho_0.
\end{split}
\]
Now we will write the fixed point locus of this involution as
$$
Hol(J)^\rho=\{ u \in Hol(J) : I(u)=u\}
.
$$
Take a point $p \in M$, a submanifold $S \subset M$ 
and a spherical homology class $A \in \mathrm{im}(h)$, where $h \colon \pi_2(M) \to H_2(M;\mathbb{Z})$ is the Hurewicz homomorphism, and define
$$
Hol(J;(S,p;A)\,)=\big\{ u \in Hol(J): u(\nu) \in S, u(\sigma)=p, [u]=A\big\}
$$
where $\nu=[1:0] \in \mathbb{CP}^1$ is the ``north-pole'' and $\sigma=[0:1] \in \mathbb{CP}^1$
is the ``south-pole''. 
Note that both the north- and the south-pole lie on the real part $\mathbb{RP}^1=\mathrm{Fix}(\rho_0)
\subset \mathbb{CP}^1$. 
The parametrization is not yet fully determined by just two marked points, so we still have a $\C^*$-action on this space.
Later, we will mod out by this action.

Suppose now that $S$ is invariant under $\rho$, that the point $p$ lies in the Lagrangian $L=\mathrm{Fix}(\rho)$, and that the homology class $A$ is anti-invariant, so $\rho_* A=-A$.
Then the space $Hol(J,(S,p;A)\,)$ is invariant under the involution $I$ and we set
$$
Hol^\rho(J,(S,p;A)\,)=Hol(J,(S,p;A)\,) \cap Hol(J)^\rho.
$$
If $\Sigma \subset M$ is a symplectic submanifold we will write $\mathscr{J}(\Sigma,\rho)$ for the space of all $\omega$-compatible almost complex structures on $M$, which are anti-invariant under the anti-symplectic involution $\rho$ and which restrict on $\Sigma$ to an $\omega|_{\Sigma}$-compatible almost complex structure such that $\Sigma$ becomes a $J$-holomorphic submanifold of $M$. The main result of this section is
the following theorem.
\begin{thm}\label{realmain}
Assume that $(M,\omega,\mathcal{D},\rho)$ is a decorated real symplectic manifold
with decoration $\mathcal{D}=(\Sigma,A,S)$.
Then for every point $p \in L \cap \Sigma^c$ and every almost complex structure
$J \in \mathscr{J}(\Sigma,\rho)$ the moduli space $\mathcal{M}_J^\rho(S,p;A)=Hol^\rho(J,(S,p;A)\,)/\C^*$ is nonempty.
\end{thm}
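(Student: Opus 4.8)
The plan is to derive the invariant curve from the oddness of the \emph{ordinary} Gromov--Witten number $GW_A([S],[p])$ by letting the involution $I$ act on a mod-$2$ count, and then to pass from generic $J$ to all of $\mathscr J(\Sigma,\rho)$ by Gromov compactness. First I would record a compactness observation: since $[\omega]$ is primitive and integral with $\langle[\omega],A\rangle=1$, every $J$-holomorphic sphere in the class $A$ has minimal positive symplectic area. Hence none of them is multiply covered (a $k$-fold cover would represent a class $A'$ with $k\langle[\omega],A'\rangle=1$) and no nonconstant bubbling can occur, for the nonconstant components of a Gromov limit would decompose $A$ into classes of positive $\omega$-area, contradicting indecomposability of $A$. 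Consequently, for every $\omega$-compatible $J$ the moduli space $\mathcal M_J(S,p;A)=Hol(J,(S,p;A))/\C^*$ is compact, and so is its parametrized version over a path of almost complex structures --- a Gromov-converging sequence can at worst sprout ghost components, which are absorbed after stabilization and still leave an honest element of $\mathcal M_J(S,p;A)$. In particular ``$\mathcal M_J^\rho(S,p;A)\neq\emptyset$'' is a closed condition on $J$.

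Next I would handle a generic anti-invariant $J$. Write $\mathscr J(\rho)\supset\mathscr J(\Sigma,\rho)$ for the space of $\omega$-compatible $\rho$-anti-invariant almost complex structures. A $J$-holomorphic sphere $u$ in the class $A$ through the given point $p\in L\cap\Sigma^c$ (with $L=\mathrm{Fix}(\rho)$) is simple by the first step and, being nonconstant of positive area, is not contained in the Lagrangian $L$; it therefore has an injective point $z_0\notin\{\nu,\sigma\}$ with $u(z_0)\notin L$. Near $u(z_0)$ the anti-invariance requirement on $J$ and the marked-point constraints are both vacuous, so the standard Sard--Smale argument yields a residual set of $J\in\mathscr J(\rho)$ (depending on $p$) for which $\mathcal M_J(S,p;A)$ is a compact oriented manifold of the expected dimension zero and the signed count of its points equals $GW_A([S],[p])$, hence is odd; in particular $\mathcal M_J(S,p;A)$ has odd cardinality. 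Because $J$ is anti-invariant, $\rho_*A=-A$, $\rho(S)=S$ and $\rho(p)=p$, the involution $I$ descends to an involution $\bar I$ of the finite set $\mathcal M_J(S,p;A)$. A free involution on a finite set has even cardinality, so $\bar I$ has a fixed point $[u]$. Such a fixed point satisfies $I(u)=u\circ(z\mapsto\mu z)$ for some $\mu\in\C^*$; squaring this identity and using that the simple curve $u$ has trivial reparametrization stabilizer forces $|\mu|=1$, after which one further rotation of the domain produces a representative with $I(u)=u$ exactly. Thus $[u]$ represents a point of $\mathcal M_J^\rho(S,p;A)=Hol^\rho(J,(S,p;A))/\C^*$, which is therefore nonempty for every $J$ in this residual set.

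Finally, for an arbitrary $J_0\in\mathscr J(\Sigma,\rho)$ I would pick $J_n\to J_0$ in $\mathscr J(\rho)$ with each $J_n$ in the above residual set and choose $[u_n]\in\mathcal M_{J_n}^\rho(S,p;A)$. By the compactness of the first step a subsequence of the $[u_n]$ Gromov-converges to some $[u_\infty]\in\mathcal M_{J_0}(S,p;A)$; the constraints $u_\infty(\nu)\in S$ (here $S$ is closed), $u_\infty(\sigma)=p$, and the $I$-invariance all persist in the limit, so $[u_\infty]\in\mathcal M_{J_0}^\rho(S,p;A)$. This shows $\mathcal M_J^\rho(S,p;A)\neq\emptyset$ for every $J\in\mathscr J(\Sigma,\rho)$, as claimed. (Equivalently: the closed set of $J$ with $\mathcal M_J^\rho(S,p;A)\neq\emptyset$ from the first step contains the dense residual set of the second step, so it is everything.)

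I expect the main obstacle to be the interplay between the $\C^*$-reparametrization and the involution in the middle step --- that is, verifying that a fixed point of the induced involution $\bar I$ on the unparametrized moduli space genuinely lifts to a parametrized curve fixed by $I$, so that it really lies in $Hol^\rho(J,(S,p;A))$ --- together with making sure that the oddness of $GW_A([S],[p])$ survives as a count of an honest compact zero-dimensional manifold; this last point is exactly where indecomposability of $A$ and primitivity of $[\omega]$ enter. The restriction to $J\in\mathscr J(\Sigma,\rho)$ in the statement, by contrast, is not used in the argument beyond the inclusion $\mathscr J(\Sigma,\rho)\subset\mathscr J(\rho)$; it is imposed only because the subsequent application to Christmas trees requires $\Sigma$ to be $J$-holomorphic.
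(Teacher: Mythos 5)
Your overall strategy --- derive an invariant sphere from the oddness of $GW_A([S],[p])$ by letting the involution $I$ act on a zero-dimensional moduli space and using that a free involution on a finite set has even cardinality, then pass from generic to arbitrary $J$ by Gromov compactness --- is exactly the skeleton of the paper's proof. Your computation promoting a fixed point of the induced involution on $Hol(J,(S,p;A))/\C^*$ (where $I(u)=u\circ(z\mapsto\mu z)$ with $|\mu|=1$) to a genuine fixed point via the rotation $z\mapsto e^{i\theta/2}z$ is correct and is a clean special case of Proposition~\ref{pseufifi}; the compactness step (no multiple covers, no bubbling in an indecomposable class of area $1$, closedness in $J$ of nonemptiness) is also fine.

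The gap is in the transversality step, and it is not a technicality. To perturb $J$ within the class of $\rho$-anti-invariant structures you must perturb simultaneously at $q=u(z_0)$ and at $\rho(q)$, so the Sard--Smale argument needs a point $z_0$ with $du(z_0)\neq 0$ and $u^{-1}\{u(z_0),\rho(u(z_0))\}=\{z_0\}$ --- a \emph{$\rho$-injective point} in the paper's terminology --- not merely an injective point with $u(z_0)\notin L$. If the curve passes through both $q$ and $\rho(q)$ at every injective point, the forced second perturbation contributes an uncontrolled term to the linearization and surjectivity of the universal operator fails. This happens precisely when $\mathrm{im}(u)=\rho(\mathrm{im}(u))$, i.e. when $u$ is a pseudo-fixed point, $I(u)=u\circ\phi$ with $\phi\in PSL_2(\C)$; the dangerous case is $\phi\rho_0$ a \emph{free} anti-holomorphic involution of $\C\P^1$ (type II in the paper), for then $u$ has no $\rho$-injective points at all, is invisible to the induced involution on the two-marked quotient (so it does not spoil your parity count, but it does spoil regularity), and cannot be made transverse by anti-invariant perturbations. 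Ruling these curves out is exactly the content of Proposition~\ref{pseuII} combined with Lemma~\ref{rinj}: positivity of intersection with the $\rho$-invariant, $J$-complex hypersurface $\Sigma$ and $A\circ[\Sigma]=1$ force the unique point of $u^{-1}(\Sigma)$ to be fixed by $\phi\rho_0$, so $\phi\rho_0$ cannot act freely. This is where the hypothesis $J\in\mathscr{J}(\Sigma,\rho)$ is genuinely used, so your closing remark that it plays no role beyond the inclusion into $\mathscr{J}(\rho)$ is wrong --- and your choice to perturb in the larger space $\mathscr{J}(\rho)$, where $\Sigma$ need not stay $J'$-holomorphic, discards the only available tool for closing the gap. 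You should perturb within $\mathscr{J}(\Sigma,\rho)$ and establish the existence of $\rho$-injective points before invoking Sard--Smale.
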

The proof of Theorem~\ref{realmain} needs some preparation. 
We first
recall from \cite[Section\,2.5]{McDuff_Salamon:holomorphic} that a holomorphic curve
$u \colon \mathbb{CP}^1 \to M$ is called {\bf multiply covered} if there exists
a holomorphic curve $v \colon \mathbb{CP}^1 \to M$ and a holomorphic map $\phi \colon \mathbb{CP}^1 \to \mathbb{CP}^1$ satisfying
$$u=v \circ \phi, \qquad \mathrm{deg}(\phi)>1.$$ 
If a curve is not multiply covered, it is called {\bf simple}. 
\begin{lemma}\label{simpsimp}
A holomorphic curve $u \in Hol(J)$ is simple if and only if $I(u)$ is simple.
\end{lemma}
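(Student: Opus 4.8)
The plan is to use that $I$ is an involution, so that it suffices to prove a single implication: if $u\in Hol(J)$ is multiply covered, then so is $I(u)$. Indeed, since $\rho^2=\mathrm{id}$ and $\rho_0^2=\mathrm{id}$, one computes $I^2(u)=\rho\circ(\rho\circ u\circ\rho_0)\circ\rho_0=u$, so $I\circ I=\mathrm{id}$; granting the implication, applying it to $I(u)$ in place of $u$ yields the converse, and recalling that ``simple'' means ``not multiply covered'' finishes the lemma.

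To prove the implication, suppose $u=v\circ\phi$ with $v\in Hol(J)$ and $\phi\colon\mathbb{CP}^1\to\mathbb{CP}^1$ holomorphic of degree $\geq 2$. I would factor
$$I(u)=\rho\circ v\circ\phi\circ\rho_0=(\rho\circ v\circ\rho_0)\circ(\rho_0\circ\phi\circ\rho_0)=I(v)\circ\psi,\qquad \psi:=\rho_0\circ\phi\circ\rho_0.$$
The excerpt already establishes that $I$ maps $Hol(J)$ into $Hol(J)$ (this uses $\rho^*J=-J$ together with $\rho_0$ being anti-holomorphic), so $I(v)$ is again a $J$-holomorphic sphere. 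Thus, once we know that $\psi$ is a holomorphic self-map of $\mathbb{CP}^1$ of degree $\geq 2$, the factorization $I(u)=I(v)\circ\psi$ exhibits $I(u)$ as multiply covered.

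It remains to analyze $\psi$. Holomorphicity is immediate: $\rho_0$ is anti-holomorphic, hence $\phi\circ\rho_0$ is anti-holomorphic and $\rho_0\circ\phi\circ\rho_0$ is holomorphic. For the degree, I would argue that for a non-constant holomorphic self-map of $\mathbb{CP}^1$ the degree equals the cardinality of a generic fibre, and $\psi^{-1}(w)=\rho_0\big(\phi^{-1}(\rho_0(w))\big)$, so $\deg\psi=\deg\phi\geq 2$; alternatively one invokes multiplicativity of the topological degree together with $\deg\rho_0=-1$ (complex conjugation reverses the orientation of $S^2$), which again gives $\deg\psi=\deg\phi$. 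This orientation/degree bookkeeping is the only point requiring any care and is the nearest thing to an obstacle; the rest of the argument is a formal manipulation of the definitions. Combining the implication with $I^2=\mathrm{id}$ gives the stated equivalence.
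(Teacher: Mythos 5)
Your proposal is correct and follows essentially the same route as the paper: both rest on the identity $I(v\circ\phi)=I(v)\circ(\rho_0\circ\phi\circ\rho_0)$, the fact that conjugating $\phi$ by $\rho_0$ preserves its degree, and $I^2=\mathrm{id}$ to get the second implication for free. The only difference is that you argue via the contrapositive (multiply covered implies multiply covered) while the paper assumes a factorization of $I(u)$ and pulls it back to $u$; this is a purely organizational distinction.
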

\begin{proof}
First suppose that $u$ is simple and suppose that $v \in Hol(J)$ and
$\phi \colon \mathbb{CP}^1 \to \mathbb{CP}^1$ is a holomorphic map such that
$$
I(u)=v \circ \phi.
$$
By using that $I$ is an involution, we compute
$$
u=I^2(u)=I(v \phi)=\rho v \phi \rho_0=\rho v\rho_0 \rho_0 \phi \rho_0=I(v) \circ (\rho_0 \phi \rho_0).
$$
Since $u$ is simple by assumption we conclude that
$$
\mathrm{deg}(\phi)=\mathrm{deg}(\rho_0 \phi \rho_0)=1
$$
and therefore $I(u)$ is simple as well. This proves the "only if" part and the "if" part follows
again from the fact that $I^2(u)=u$. 
\end{proof}

We now need that fact that $Aut(\C \P^1)=PSL_2(\C)$.
\begin{fed}
A simple holomorphic curve $u \in Hol(J)$ is called a {\bf pseudo-fixed point} if there exists
$\phi \in PSL_2(\mathbb{C})$ such that $I(u)=u \circ \phi$. It is called a {\bf fixed point}
if $\phi$ is the identity, i.e.~$I(u)=u$.
\end{fed}
\begin{rem}
It follows from \cite[Proposition 2.5.1]{McDuff_Salamon:holomorphic} that a simple holomorphic curve
has no nontrivial automorphisms. Therefore the map $\phi$ for a pseudo-fixed point is uniquely determined.
\end{rem}
\begin{lemma}
Assume that $u \in Hol(J)$ is a pseudo-fixed point, so that $I(u)=u \phi$
for some $\phi \in PSL_2(\mathbb{C})$. Then $\phi \rho_0 \colon \mathbb{CP}^1 \to \mathbb{CP}^1$ is an anti-holomorphic involution.
\end{lemma}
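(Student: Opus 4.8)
The plan is to split the statement into its two parts: that $\phi\rho_0$ is anti-holomorphic, and that it squares to the identity. The first part is immediate: $\phi\in PSL_2(\mathbb{C})$ is a holomorphic automorphism of $\mathbb{CP}^1$ and $\rho_0$ is anti-holomorphic, so the composition $\phi\circ\rho_0$ is anti-holomorphic (and a bijection). The substance is the involution property, and for this I would exploit the fact that $I$ itself is an involution together with the rigidity of simple curves recorded in the preceding remark.

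Concretely, I would first rewrite the pseudo-fixed point condition $I(u)=u\circ\phi$, that is $\rho\circ u\circ\rho_0=u\circ\phi$, in the form $\rho\circ u=u\circ\phi\circ\rho_0$, using $\rho_0^2=\mathrm{id}$. Then I compute $I^2(u)$ by applying $I$ to $u\circ\phi$:
$$
I(u\circ\phi)=\rho\circ u\circ\phi\circ\rho_0=(u\circ\phi\circ\rho_0)\circ\phi\circ\rho_0=u\circ(\phi\circ\rho_0)^2,
$$
where in the middle step I substituted $\rho\circ u=u\circ\phi\circ\rho_0$. Since $I^2=\mathrm{id}$ we obtain $u=u\circ(\phi\circ\rho_0)^2$. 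Now $(\phi\circ\rho_0)^2$ is a composition of two anti-holomorphic bijections of $\mathbb{CP}^1$, hence an element of $\Aut(\mathbb{CP}^1)=PSL_2(\mathbb{C})$; because $u$ is simple it has no nontrivial automorphisms by \cite[Proposition 2.5.1]{McDuff_Salamon:holomorphic}, so $(\phi\circ\rho_0)^2=\mathrm{id}$. Combined with the anti-holomorphicity already established, $\phi\rho_0$ is an anti-holomorphic involution.

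The only point requiring care — hardly a genuine obstacle — is bookkeeping the order of composition when applying $I$ to a reparametrized curve, and making sure that the object fed into the automorphism-rigidity statement really is a biholomorphism of $\mathbb{CP}^1$ rather than merely a self-map; both are handled by the observation that $\phi\circ\rho_0$ is an anti-holomorphic automorphism and that the square of such a map lies in $PSL_2(\mathbb{C})$. No compactness, transversality, or Gromov--Witten input is needed for this lemma.
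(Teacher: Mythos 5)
Your argument is correct and is essentially the paper's own proof: both establish anti-holomorphicity as immediate and then derive $u=I^2(u)=u\circ(\phi\rho_0)^2$, concluding $(\phi\rho_0)^2=\mathrm{id}$ from the fact that the simple curve $u$ has no nontrivial automorphisms (\cite[Proposition 2.5.1]{McDuff_Salamon:holomorphic}). The only difference is cosmetic bookkeeping in the intermediate steps of the computation.
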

\begin{proof}
That $\phi \rho_0$ is anti-holomorphic is clear. To check that it is an involution we compute
$$u=I^2(u)=I(u  \phi)=\rho u \phi \rho_0=\rho u\rho_0 \rho_0 \phi \rho_0=I(u)\rho_0 \phi \rho_0=u \phi \rho_0 \phi \rho_0.$$
Since $u$ is simple by assumption it follows from \cite[Proposition 2.5.1]{McDuff_Salamon:holomorphic}
that $u$ has no nontrivial automorphisms so that 
$$(\phi \rho_0)^2=\mathrm{id}.$$
This finishes the proof of the lemma.
\end{proof} 
We abbreviate by $\mathcal{I} \subset \mathrm{Diff}(\mathbb{CP}^1)$ the space of
anti-holomorphic involutions of $\mathbb{CP}^1$.
\begin{prop}\label{twocomp}
The space $\mathcal{I}$ has two connected components. 
\end{prop}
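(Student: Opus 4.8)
The plan is to classify anti-holomorphic involutions of $\mathbb{CP}^1$ up to conjugacy and show there are exactly two classes, which then correspond to the two connected components. First I would recall that every anti-holomorphic map $\mathbb{CP}^1 \to \mathbb{CP}^1$ can be written as $z \mapsto g(\bar z)$ for a unique $g \in PSL_2(\mathbb{C})$; composing with the standard conjugation $\rho_0$, the space $\mathcal{I}$ is thus identified with the set of those $g \in PSL_2(\mathbb{C})$ for which $z \mapsto g(\bar z)$ is an involution, i.e.\ $g \bar g = \mathrm{id}$ (where $\bar g$ denotes entrywise conjugation of a matrix representative). Since $PSL_2(\mathbb{C})$ acts on $\mathcal{I}$ by conjugation — and this action is by homeomorphisms — and since $PSL_2(\mathbb{C})$ is connected, two involutions in the same orbit lie in the same connected component of $\mathcal{I}$. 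So it suffices to (a) show there are at most two conjugacy classes, and (b) show the two representatives lie in distinct components.

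For step (a), the two model involutions are complex conjugation $\sigma_1(z) = \bar z$, whose fixed locus is $\mathbb{RP}^1$ (a circle), and the antipodal map $\sigma_2(z) = -1/\bar z$, which is fixed-point free. I would argue that any anti-holomorphic involution is conjugate to one of these two: given $\sigma \in \mathcal{I}$, either $\sigma$ has a fixed point or it does not. If it has a fixed point, after conjugating by an element of $PSL_2(\mathbb{C})$ we may assume the fixed point is $0$; writing $\sigma(z) = g(\bar z)$ and imposing $g(0) = 0$ together with the involution condition forces $g$ to be (conjugate to) the identity or to $z \mapsto z$ composed with something of finite order, and a short computation with the matrix equation $g\bar g = \pm\mathrm{id}$ pins it down to the conjugacy class of $\sigma_1$. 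If $\sigma$ is fixed-point free, one shows directly that the involution condition forces the matrix equation $g \bar g = -\mathrm{id}$ (the sign that has no solution giving a fixed point), and all such $g$ form a single conjugacy class, that of $\sigma_2$ — this is essentially the statement that a quaternionic structure on $\mathbb{C}^2$ is unique up to isomorphism.

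For step (b), I would distinguish the two components by a topological invariant: $\mathrm{Fix}(\sigma_1) = \mathbb{RP}^1 \neq \emptyset$ while $\mathrm{Fix}(\sigma_2) = \emptyset$, and the property ``has a fixed point'' is both open and closed on $\mathcal{I}$. Openness of the fixed-point-free locus is clear by compactness of $\mathbb{CP}^1$ (a $C^0$-small perturbation of a fixed-point-free map is fixed-point free). Openness of the locus with a fixed point requires a small argument: near a fixed point $z_0$ of $\sigma$, the differential $d\sigma_{z_0}$ is an anti-linear involution of the tangent line, so it has $\pm 1$ as eigenvalues; in either case $\mathrm{id} - d\sigma_{z_0}$ or $\mathrm{id} + d\sigma_{z_0}$ has a nontrivial kernel, but in fact the fixed locus of an anti-holomorphic involution with a fixed point is automatically a circle (a totally real submanifold, locally $\mathbb{RP}^1$), which persists under perturbation — this can be seen by the implicit function theorem applied to the real-analytic equation cutting out the fixed locus, or by invoking the classification in (a) locally. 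The main obstacle I anticipate is making step (b)'s openness-of-fixed-points claim clean without circularity: the slick route is to run (a) first to get that $\mathcal{I}$ is a union of two $PSL_2(\mathbb{C})$-orbits, each of which is connected (being a continuous image of a connected group), and then observe these orbits are disjoint and their union is all of $\mathcal{I}$, so they are exactly the two connected components — with the fixed-point dichotomy used only to see the two orbits are genuinely distinct, not to prove each is open.
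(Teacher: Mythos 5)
Your proposal is correct in substance but follows a genuinely different route from the paper. Both arguments start from the same identification of $\mathcal{I}$ with $\{[A]\in PSL_2(\mathbb{C}) : A\bar A=\pm\mathrm{id}\}$, but from there the paper proceeds by brute force: it parametrizes the lift of the $+$ class as a one-sheeted hyperboloid in $\mathbb{R}^4$ (connected) and the lift of the $-$ class as a two-sheeted hyperboloid whose sheets are exchanged by $A\mapsto -A$ (so the $\mathbb{Z}_2$-quotient is connected), with no group action and no classification of conjugacy classes. You instead invoke the conjugation action of the connected group $PSL_2(\mathbb{C})$ together with the classical fact that real and quaternionic structures on $\mathbb{C}^2$ are each unique up to isomorphism, so that each sign class is a single orbit and hence connected. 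Your route is conceptually cleaner and gives more (the conjugacy classification, which the paper only extracts later and locally in Proposition~\ref{pseufifi} via the fixed-circle argument), at the cost of importing the real/quaternionic uniqueness statement; the paper's computation is longer but entirely self-contained and elementary.

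One point needs repair. In your final ``slick route'' you assert that two disjoint connected orbits whose union is all of $\mathcal{I}$ are automatically its connected components, with the fixed-point dichotomy ``not needed to prove each is open.'' That inference is false in general: $[0,1]=[0,\tfrac12]\cup(\tfrac12,1]$ is a disjoint union of connected sets but is connected. You must show the two orbits are separated, i.e.\ each is open (equivalently closed) in $\mathcal{I}$. The cheapest fix is already in your own setup: the sign $\epsilon(\sigma)=\pm1$ defined by $A\bar A=\epsilon\,\mathrm{id}$ for any $SL_2$-lift $A$ is well defined (unchanged under $A\mapsto -A$) and continuous, hence locally constant, so the two classes are both open and closed. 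Your earlier openness discussion via persistence of the fixed circle is also salvageable but is the weaker of the two arguments; the sign invariant makes it unnecessary.
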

\begin{proof}
We first show that $\mathcal{I}$ is diffeomorphic to the space
$$\mathcal{J}=\big\{[A] \in PSL_2(\mathbb{C}): [\bar{A}]=[A^{-1}]\big\}$$
where for $A \in SL_2(\mathbb{C})$ we denote by $[A]$ its equivalence class in the
projectivization $PSL_2(\mathbb{C})$ and by $\bar{A}$ the complex conjugate of the matrix $A$.
We define a map
$$
\Phi \colon \mathcal{I} \to \mathcal{J}, \qquad \psi \mapsto \psi \rho_0.
$$
To check that this map is well defined we first note that $\psi \rho_0 \colon \mathbb{CP}^1 \to \mathbb{CP}^1$ is a biholomorphism so that $\psi \rho_0=[A] \in PSL_2(\mathbb{C})$. 
Now we compute using the fact that $\rho_0$ as well as $\psi$ are involutions
$$
[\bar{A}]=\rho_0(\psi \rho_0) \rho_0=\rho_0 \psi=\rho_0^{-1} \psi^{-1}=(\psi \rho_0)^{-1}=[A^{-1}].
$$
This proves that $\Phi$ is well defined. To show that it is a diffeomorphism we construct its inverse as
follows
$$
\Psi \colon \mathcal{J} \to \mathcal{I}, \qquad \phi \mapsto \phi \rho_0.
$$
That $\Psi$ is inverse to $\Phi$ is an immediate consequence from the fact that $\rho_0$ is an involution.
It therefore just remains to check that $\Psi$ is well defined, i.e.~that $\phi \rho_0$ is actually
an involution. This follows from the following computation
$$(\phi \rho_0)^2=\phi (\rho_0 \phi \rho_0)=\phi \phi^{-1}=\mathrm{id}.$$
This proves that $\mathcal{I}$ and $\mathcal{J}$ are diffeomorphic. 

In view of the diffeomorphism established above we are left with showing that $\mathcal{J}$
has two connected components. We rewrite $\mathcal{J}$ first as the quotient
$$
\mathcal{J}=\widetilde{\mathcal{J}}/\mathbb{Z}_2
$$
where
$$
\widetilde{\mathcal{J}}=\widetilde{\mathcal{J}}_+ \cup \widetilde{\mathcal{J}}_-
$$
with
$$
\widetilde{\mathcal{J}}_\pm=\Bigg\{ A=\left(\begin{array}{cc}
a & b\\
c & d
\end{array}\right) \in SL_2(\mathbb{C}): \left(\begin{array}{cc}
a & b\\
c & d
\end{array}\right)=\pm \left(\begin{array}{cc}
\bar{d} & -\bar{b}\\
-\bar{c} & \bar{a}
\end{array}\right)\Bigg\}
$$
and the $\mathbb{Z}_2$-action identifies $A$ with $-A$. Note that both $\widetilde{\mathcal{J}}_+$ and
$\widetilde{\mathcal{J}}_-$ are invariant under the $\mathbb{Z}_2$-action. If 
$A=\left(\begin{array}{cc}
a & b\\
c & d
\end{array}\right) \in \widetilde{\mathcal{J}}_+$, then this is equivalent that
$$a=\bar{d}, \quad b,c \in i \mathbb{R}, \quad |a|^2-bc=1.$$
Hence we can identify $\widetilde{\mathcal{J}}_+$ with the hyperboloid of one sheet
$$\mathcal{H}_1=\big\{(x_1,x_2,x_3,x_4) \in \mathbb{R}^4: x_1^2+x_2^2+x_3^2-x_4^2=1\big\}$$
via the map
$$\mathcal{H}_1 \to \widetilde{\mathcal{J}}_+, \quad
(x_1,x_2,x_3,x_4) \mapsto\left(\begin{array}{cc}
x_1+i x_2 & i(x_3+x_4)\\
i(x_3-x_4) & x_1-ix_2
\end{array}\right).$$
The hyperboloid of one sheet $\mathcal{H}_1$ is connected and therefore we conclude that
$\widetilde{\mathcal{J}}_+$ and $\widetilde{\mathcal{J}}_+/\mathbb{Z}_2$ are connected as well. 
\\ \\
It remains to show that $\widetilde{\mathcal{J}}_-/\mathbb{Z}_2$ is connected as well. 
 If 
$A=\left(\begin{array}{cc}
a & b\\
c & d
\end{array}\right) \in \widetilde{\mathcal{J}}_-$, then this is equivalent that
$$a=-\bar{d}, \quad b,c \in \mathbb{R}, \quad |a|^2-bc=1.$$
Hence we can identify $\widetilde{\mathcal{J}}_+$ with the hyperboloid of two sheets
$$\mathcal{H}_2=\big\{(x_1,x_2,x_3,x_4) \in \mathbb{R}^4: -x_1^2-x_2^2-x_3^2+x_4^2=1\big\}$$
via the map
$$\mathcal{H}_2 \to \widetilde{\mathcal{J}}_-, \quad
(x_1,x_2,x_3,x_4) \mapsto\left(\begin{array}{cc}
x_1+i x_2 & x_3+x_4\\
x_3-x_4 & -x_1+ix_2
\end{array}\right).$$
The pullback of the involution on $\widetilde{\mathcal{J}}_-$ to $\mathcal{H}_2$ is given by
$x \mapsto -x$. This involution interchanges the two sheets of $\mathcal{H}_2$ and therefore
$\widetilde{\mathcal{J}}_-/\mathbb{Z}_2$ is connected. This finishes the proof of the Proposition.
\end{proof}

Keeping the notation from the proof of Proposition~\ref{twocomp}, we abbreviate the two connected
components of the space $\mathcal{I}$ by
$$\mathcal{I}_\pm:=\Psi(\mathcal{J}_\pm), \quad \mathcal{J}_\pm:=\widetilde{\mathcal{J}}_\pm/\mathbb{Z}_2.$$
An example of a holomorphic involution in $\mathcal{I}_+$ is the involution 
$\rho_0 \colon [z_0:z_1] \mapsto [\bar{z}_0:\bar{z}_1]$ and an example 
of an anti-holomorphic involution in $\mathcal{I}_-$ is the antipodal map
$\sigma_0 \colon [z_0:z_1] \mapsto [\bar{z}_1:-\bar{z}_0]$. Note that the fixed point set of
$\rho_0$ is topologically a circle, while $\sigma_0$ has no fixed points. Since the topological type of the fixed point set only depends on the connected component of $\mathcal{I}$ we conclude the following lemma.
\begin{lemma}\label{fix}
Each anti-holomorphic involution in $\mathcal{I}_-$ acts freely, while the fixed point set of
each involution in $\mathcal{I}_+$ is topologically a circle. 
\end{lemma}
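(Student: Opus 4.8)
\emph{Proof plan.} The idea is to show that each of the connected components $\mathcal{I}_+$ and $\mathcal{I}_-$ is a single orbit of $PSL_2(\mathbb{C})=\Aut(\mathbb{CP}^1)$ acting on $\mathcal{I}$ by conjugation, and then to read off the topological type of the fixed-point set from the two explicit representatives $\rho_0\in\mathcal{I}_+$ and $\sigma_0\in\mathcal{I}_-$.

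First I would record the action: for $\phi\in PSL_2(\mathbb{C})$ and $\psi\in\mathcal{I}$ the map $\phi\cdot\psi:=\phi\psi\phi^{-1}$ is again an anti-holomorphic involution, and $\mathrm{Fix}(\phi\cdot\psi)=\phi(\mathrm{Fix}(\psi))$; in particular involutions lying in one orbit have diffeomorphic fixed-point sets. Transporting this action through the diffeomorphism $\Phi$ of Proposition~\ref{twocomp} it becomes the smooth (indeed real-algebraic) ``twisted conjugation'' $A\mapsto \phi A\bar\phi^{-1}$ on $\mathcal{J}\subset PSL_2(\mathbb{C})$, so everything in sight is smooth.

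Next I would compute orbit dimensions. Writing $\psi=\phi\rho_0$, conjugation by $\psi$ acts on $PSL_2(\mathbb{C})$ by $B\mapsto \phi\bar B\phi^{-1}$, and since $\psi^2=\mathrm{id}$ is equivalent to $\phi\bar\phi=\mathrm{id}$ in $PSL_2(\mathbb{C})$, its differential $\xi\mapsto\Ad(\phi)\bar\xi$ is an $\mathbb{R}$-linear, $\mathbb{C}$-antilinear involution of the $3$-complex-dimensional Lie algebra $\mathfrak{sl}_2(\mathbb{C})$. The fixed subspace of any conjugate-linear involution of $\mathbb{C}^m$ is a real form, of real dimension $m$; hence the stabilizer of $\psi$ is $3$-dimensional and every orbit has dimension $\dim PSL_2(\mathbb{C})-3=6-3=3$. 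Since, by the proof of Proposition~\ref{twocomp}, $\mathcal{I}\cong\mathcal{J}$ is a $3$-manifold, each orbit is open in $\mathcal{I}$; a connected space cannot be a disjoint union of more than one nonempty open set, so by Proposition~\ref{twocomp} each of $\mathcal{I}_+$ and $\mathcal{I}_-$ is a single orbit.

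It then remains only to evaluate at the standard representatives: $\mathrm{Fix}(\rho_0)=\mathbb{RP}^1\cong S^1$ and $\mathrm{Fix}(\sigma_0)=\emptyset$, which gives the two assertions of the lemma. The step I expect to require the most care is the dimension count --- concretely, verifying that the stabilizer Lie algebra is genuinely $3$-dimensional, i.e.\ that the fixed subspace of a conjugate-linear involution of $\mathbb{C}^3$ is a real $3$-dimensional subspace (so that no orbit can be lower-dimensional and thus fail to be open). As an alternative that sidesteps the Lie-theoretic bookkeeping, one can instead solve $\psi(z)=z$ directly in the coordinates on $\mathcal{J}_\pm$ from Proposition~\ref{twocomp}: for $\mathcal{J}_+$ the equation cuts out a genuine circle in $\mathbb{C}$ (together with the point at infinity in the degenerate case), while for $\mathcal{J}_-$ the discriminant of the resulting equation is always negative, so the fixed-point set is empty.
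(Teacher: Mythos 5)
Your proposal is correct and follows essentially the same route as the paper: exhibit the representatives $\rho_0\in\mathcal{I}_+$ (fixed set $\mathbb{RP}^1\cong S^1$) and $\sigma_0\in\mathcal{I}_-$ (no fixed points), and argue that the topological type of the fixed-point set is constant on each connected component of $\mathcal{I}$. The paper simply asserts this constancy, whereas you justify it by showing each component is a single $PSL_2(\mathbb{C})$-conjugation orbit (via the stabilizer being a real form of $\mathfrak{sl}_2(\mathbb{C})$, so orbits are open in the $3$-manifold $\mathcal{I}$); both this and your alternative explicit computation in the coordinates of Proposition~\ref{twocomp} are sound.
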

\begin{fed}
A pseudo-fixed point $u \in Hol(J)$ satisfying $I(u)=u \phi$ is called of {\bf type I}
if $\phi \in \mathcal{J}_+$. 
Otherwise $u$ is called of {\bf type II}, meaning that $\phi \in \mathcal{J}_-$.
\end{fed}

\begin{prop}\label{pseufifi}
Assume that $u \in Hol(J)$ is a pseudo-fixed point of type I. Then there exists $\psi \in 
PSL_2(\mathbb{C})$ such that $u \circ \psi$ is a fixed point. 
\end{prop}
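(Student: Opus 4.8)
The plan is to reduce $\tau:=\phi\circ\rho_0$ to the normal form $\rho_0$ by an inner automorphism of $PSL_2(\mathbb{C})$, and then to conjugate $u$ accordingly. First I would rewrite the pseudo-fixed point relation $I(u)=\rho\circ u\circ\rho_0=u\circ\phi$ by composing with $\rho_0$ on the right, obtaining
$$
\rho\circ u=u\circ\tau,\qquad \tau:=\phi\circ\rho_0;
$$
we already know that $\tau$ is an anti-holomorphic involution, and the hypothesis that $u$ is of type~I means precisely that $\phi\in\mathcal{J}_+$, hence $\tau=\Psi(\phi)\in\mathcal{I}_+$.

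The main step is to show that every $\tau\in\mathcal{I}_+$ is conjugate to $\rho_0$ inside $PSL_2(\mathbb{C})$. By Lemma~\ref{fix} the fixed point set $\mathrm{Fix}(\tau)$ is topologically a circle, so I can pick three distinct points $a,b,c\in\mathrm{Fix}(\tau)$. Since over $\mathbb{C}$ one has $PSL_2(\mathbb{C})=PGL_2(\mathbb{C})$, which acts sharply $3$-transitively on $\mathbb{CP}^1$, there is $\psi\in PSL_2(\mathbb{C})$ with $\psi(0)=a$, $\psi(1)=b$, $\psi(\infty)=c$. Then $\psi^{-1}\circ\tau\circ\psi$ is an anti-holomorphic involution fixing $0$, $1$ and $\infty$; writing a general anti-holomorphic automorphism of $\mathbb{CP}^1$ as a M\"obius transformation in the variable $\bar z$ and imposing successively that it fixes $\infty$, then $0$, then $1$, forces it to equal $z\mapsto\bar z=\rho_0$. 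Hence $\psi^{-1}\circ\tau\circ\psi=\rho_0$, equivalently $\tau\circ\psi=\psi\circ\rho_0$.

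It then remains to check that $v:=u\circ\psi$ is a fixed point. It is again a simple element of $Hol(J)$ since $\psi$ is a biholomorphism of $\mathbb{CP}^1$, and using $\rho\circ u=u\circ\tau$, then $\tau\circ\psi=\psi\circ\rho_0$, and finally $\rho_0^2=\mathrm{id}$, one computes
$$
I(v)=\rho\circ u\circ\psi\circ\rho_0=u\circ\tau\circ\psi\circ\rho_0=u\circ\psi\circ\rho_0\circ\rho_0=u\circ\psi=v,
$$
so $v$ is a fixed point, as claimed.

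I expect the only genuine obstacle to be the conjugacy statement for $\tau$; once that is in place the rest is a purely formal manipulation of the defining relations. As an alternative to the fixed-point argument one could compute the $PSL_2(\mathbb{C})$-conjugation action directly on the hyperboloid model $\widetilde{\mathcal{J}}_+\cong\mathcal{H}_1$ appearing in the proof of Proposition~\ref{twocomp}, but I find the route via Lemma~\ref{fix} and $3$-transitivity cleaner.
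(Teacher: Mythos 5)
Your proof is correct and follows essentially the same route as the paper: both reduce the anti-holomorphic involution $\tau=\phi\circ\rho_0$ to the model $\rho_0$ by conjugating with some $\psi\in PSL_2(\mathbb{C})$, and then verify $I(u\circ\psi)=u\circ\psi$ by the same formal computation. The only difference lies in how the conjugacy is established: the paper identifies $\mathrm{Fix}(\tau)$ as a small circle on $S^2$ and invokes transitivity of $PSL_2(\mathbb{C})$ on circles plus an ``analyticity'' argument that an anti-holomorphic involution is determined by its fixed circle, whereas you use sharp $3$-transitivity on points and the explicit normal form of an anti-holomorphic M\"obius map fixing $0,1,\infty$ --- a slightly more self-contained variant that makes the paper's analyticity step explicit.
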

\textbf{Proof: } Since $u$ is a pseudo-fixed point we have $I(u)=u \phi$ for 
$\phi \in PSL_2(\mathbb{C})$ and because $u$ is of type I we have $\phi \rho_0 \in \mathcal{I}_+$.
By Lemma~\ref{fix} we know that the fixed point set of $\phi \rho_0$ is topologically a circle. Identify 
$\mathbb{CP}^1$ with the two dimensional sphere $S^2=\{x \in \mathbb{R}^3: ||x||=1\}$ via stereographic projection. We first
claim that the fixed point set $\mathrm{Fix}(\phi \rho_0)$ is actually a small circle, namely the intersection
of $S^2$ with an affine plane in $\mathbb{R}^3$. To see this pick three points on
$\mathrm{Fix}(\phi \rho_0)$. These three points uniquely determine a small circle. Since $\phi$ and $\rho_0$
as well map small circles to small circles we conclude that this small circle is fixed under $\phi \rho_0$
and hence has to agree with $\mathrm{Fix}(\phi \rho_0)$. This shows that $\mathrm{Fix}(\phi \rho_0)$
is a small circle.
\\ \\
Since the group $PSL_2(\mathbb{C})$ acts transitively on small circles we conclude that there exists
$\psi \in PSL_2(\mathbb{C})$ satisfying
$$\psi\big(\mathrm{Fix}(\rho_0)\big)=\mathrm{Fix}\big(\phi \rho_0\big).$$
This implies that
$$\mathrm{Fix}(\phi \rho_0)=\mathrm{Fix}(\psi \rho_0 \psi^{-1}).$$
By analyticity we conclude that
$$\phi \rho_0=\psi \rho_0 \psi^{-1}.$$
Using this equality we compute
$$I(u\psi)=\rho u \psi \rho_0=\rho u \rho_0 \rho_0 \psi \rho_0=u \phi \rho_0 \psi \rho_0=u \psi \rho_0 \psi^{-1} \psi \rho_0=u \psi.$$
Hence $u \psi$ is a fixed point. This finishes the proof of the proposition. \hfill $\square$
\begin{prop}\label{pseuII}
Assume that $\Sigma \subset M$ is a complex $\rho$-invariant hypersurface and $u \in Hol(J)$ is a pseudo-fixed point satisfying $[u] \circ [\Sigma]=1$ and $\mathrm{im}(u) \not\subset \Sigma$. Then $u$ is of type I.  
\end{prop}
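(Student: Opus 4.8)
The plan is to use positivity of intersections to pin down the unique point where $u$ meets $\Sigma$, and then to observe that this point must be fixed by the anti-holomorphic involution $\phi\rho_0$, which by Lemma~\ref{fix} forces $\phi\rho_0\in\mathcal{I}_+$, i.e.\ type~I.

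First I would record the relevant holomorphicity. Since $\rho^*J=-J$ and $\rho_0$ is anti-holomorphic, the two sign changes cancel and $I(u)=\rho\circ u\circ\rho_0$ is again $J$-holomorphic (this is exactly why $I$ preserves $Hol(J)$), while $\Sigma$ is a $J$-holomorphic hypersurface. Because $\mathrm{im}(u)\not\subset\Sigma$, positivity of intersections \cite{McDuff_Salamon:holomorphic} implies that $u^{-1}(\Sigma)$ is finite and that the local intersection indices are positive integers summing to the homological intersection number $[u]\circ[\Sigma]=1$. Hence there is a unique point $z_0\in\mathbb{CP}^1$ with $u(z_0)\in\Sigma$, and $u$ meets $\Sigma$ transversally there.

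Next I would compute the preimage $I(u)^{-1}(\Sigma)$ in two different ways and compare. On the one hand, using $I(u)=\rho\circ u\circ\rho_0$ together with $\rho(\Sigma)=\Sigma$ and the fact that $\rho,\rho_0$ are involutions, we get $I(u)(z)\in\Sigma \iff u(\rho_0(z))\in\Sigma \iff \rho_0(z)=z_0$, so $I(u)^{-1}(\Sigma)=\{\rho_0(z_0)\}$. On the other hand, since $u$ is a pseudo-fixed point we have $I(u)=u\circ\phi$ with $\phi\in PSL_2(\mathbb{C})$, and then $I(u)(z)\in\Sigma \iff \phi(z)=z_0$, so $I(u)^{-1}(\Sigma)=\{\phi^{-1}(z_0)\}$. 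Comparing the two descriptions gives $\phi^{-1}(z_0)=\rho_0(z_0)$, equivalently $(\phi\rho_0)(z_0)=z_0$; that is, $z_0$ is a fixed point of the anti-holomorphic involution $\phi\rho_0$.

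Finally, suppose for contradiction that $u$ is of type~II, so $\phi\in\mathcal{J}_-$ and hence $\phi\rho_0\in\mathcal{I}_-$. By Lemma~\ref{fix}, every involution in $\mathcal{I}_-$ acts freely on $\mathbb{CP}^1$, contradicting the existence of the fixed point $z_0$. Therefore $\phi\in\mathcal{J}_+$, i.e.\ $u$ is of type~I. The only substantive input is the positivity-of-intersections step guaranteeing that $u$ hits $\Sigma$ in a single transverse point; everything after that is bookkeeping with the identity $I(u)=u\circ\phi$, so I expect that step to be the main (and only real) obstacle.
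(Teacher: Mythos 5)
Your proposal is correct and follows essentially the same route as the paper: positivity of intersections gives the unique intersection point $w_0$ with $\Sigma$, the identity $I(u)=u\phi$ combined with $\rho$-invariance of $\Sigma$ shows $\phi\rho_0(w_0)=w_0$, and Lemma~\ref{fix} then rules out type~II. Your "compute the preimage two ways" phrasing is just a repackaging of the paper's one-line computation $u\phi\rho_0(w_0)=\rho u(w_0)\in\Sigma$.
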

\textbf{Proof: } Since $[u] \circ [\Sigma]=1$, the image of $u$ is not contained in $\Sigma$ and $\Sigma$ is complex we deduce from positivity of intersections that
$\#u^{-1}(\Sigma)=1$, i.e.~there exists $w_0 \in \mathbb{CP}^1$ such that
\begin{equation}\label{unique}
u^{-1}(\Sigma)=\{w_0\}.
\end{equation}
Since $u$ is a pseudo-fixed point there exists $\phi \in PSL_2(\mathbb{C})$ such that
$I(u)=u \phi$. 
We compute using the $\rho$-invariance of $\Sigma$
$$u \phi \rho_0(w_0)=\rho u\rho_0 \rho_0(w_0)=\rho u(w_0) \in \rho \Sigma=\Sigma.$$
We deduce from (\ref{unique}) that
$$\phi \rho_0(w_0)=w_0.$$
In particular, the fixed point set of the anti-holomorphic involution $\phi \rho_0$ is not empty. We conclude 
with Lemma~\ref{fix} that $\phi \rho_0 \in \mathcal{I}_+$ or equivalently that $\phi \in \mathcal{J}_+$
and therefore $u$ is a pseudo-fixed point of type I. This proves the proposition. \hfill $\square$

\begin{fed}
Assume $u \in Hol(J)$. 
A point $w \in \mathbb{CP}^1$ is called a {\bf $\rho$-injective
point} of $u$ if
$$
du(w) \neq 0, \qquad u^{-1}\{u(w), \rho u(w))\}=\{w\}.
$$
\end{fed}
\begin{lemma}\label{rinj}
Assume that $u \in Hol(J)$ is a simple holomorphic map which is not a pseudo-fixed point. Then
the complement of the set of $\rho$-injective points of $u$ is finite.
\end{lemma}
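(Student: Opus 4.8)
The plan is to deduce this from the structure theory of simple pseudo-holomorphic curves \cite{McDuff_Salamon:holomorphic}, applied to $u$ together with its ``reflection'' $\tilde u := I(u) = \rho \circ u \circ \rho_0$. By Lemma~\ref{simpsimp} the curve $\tilde u \in Hol(J)$ is again simple, and since $\rho_0$ is a bijection of $\mathbb{CP}^1$ and $\rho$ is an involution one has
\[
\mathrm{im}(\tilde u) = \rho\bigl(\mathrm{im}(u)\bigr), \qquad\text{so}\qquad u(w) \notin \mathrm{im}(\tilde u) \iff \rho u(w) \notin \mathrm{im}(u) \iff u^{-1}(\rho u(w)) = \emptyset .
\]

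Next I would isolate two finite sets. Let $S_1 \subset \mathbb{CP}^1$ be the complement of the set of injective points of $u$, i.e.\ of those $w$ with $du(w)\neq 0$ and $u^{-1}(u(w)) = \{w\}$; this is finite by the structure theory of simple curves \cite{McDuff_Salamon:holomorphic}. Let $S_2 := \{\, w \in \mathbb{CP}^1 : u(w) \in \mathrm{im}(\tilde u)\,\}$. To see $S_2$ is finite, observe that if $\mathrm{im}(u) = \mathrm{im}(\tilde u)$ then two simple parametrizations of the same image differ by an element of $\Aut(\mathbb{CP}^1) = PSL_2(\mathbb{C})$, so $I(u) = \tilde u = u \circ \phi$ for some $\phi$, contradicting the assumption that $u$ is not a pseudo-fixed point; hence $\mathrm{im}(u) \neq \mathrm{im}(\tilde u)$. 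Two distinct simple curves intersect in only finitely many points \cite{McDuff_Salamon:holomorphic}, and since $u$ is non-constant with finite fibres, $S_2 = u^{-1}\bigl(\mathrm{im}(u) \cap \mathrm{im}(\tilde u)\bigr)$ is finite. (Equivalently, one may apply the injective-points theorem directly to the holomorphic curve $u \sqcup \tilde u$ on the disconnected surface $\mathbb{CP}^1 \sqcup \mathbb{CP}^1$, whose two components have distinct images, and read off the finiteness of $S_1$ and $S_2$ simultaneously.)

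Finally I would verify the inclusion $\{\, w \in \mathbb{CP}^1 : w \text{ is not a } \rho\text{-injective point of } u \,\} \subseteq S_1 \cup S_2$. If $w \notin S_1 \cup S_2$, then $du(w) \neq 0$, $u^{-1}(u(w)) = \{w\}$ and $u(w) \notin \mathrm{im}(\tilde u)$; by the equivalence above the last condition gives $u^{-1}(\rho u(w)) = \emptyset$, whence
\[
u^{-1}\{u(w), \rho u(w)\} = u^{-1}(u(w)) \cup u^{-1}(\rho u(w)) = \{w\},
\]
which together with $du(w) \neq 0$ is precisely the statement that $w$ is a $\rho$-injective point of $u$. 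Thus the complement of the set of $\rho$-injective points is contained in the finite set $S_1 \cup S_2$, proving the lemma.

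The content of the lemma lies entirely in the two inputs from \cite{McDuff_Salamon:holomorphic}: that a simple curve from $\mathbb{CP}^1$ has only finitely many non-injective points, and that two simple curves with distinct images meet in a finite set; the ``real'' structure enters only through Lemma~\ref{simpsimp} and the identity $\mathrm{im}(I(u)) = \rho(\mathrm{im}(u))$. The main point requiring care is therefore pinning down these two classical facts in the precise form needed — in particular the ``distinct simple curves meet finitely'' statement, which is exactly what forces the exclusion of pseudo-fixed points.
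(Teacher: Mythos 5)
Your proposal is correct and follows essentially the same route as the paper: both reduce the statement to (i) finiteness of the non-injective points of the simple curve $u$, and (ii) finiteness of $\mathrm{im}(u)\cap\mathrm{im}(I(u))$, the latter because $I(u)$ is simple by Lemma~\ref{simpsimp} and has image distinct from $\mathrm{im}(u)$ precisely when $u$ is not a pseudo-fixed point (McDuff--Salamon, Corollary 2.5.3). The only cosmetic difference is that the paper phrases step (ii) via the set of pairs $\{(w_0,w_1):u(w_0)=\rho u(w_1),\,w_0\neq w_1\}$ and its projection, whereas you take the preimage $u^{-1}\bigl(\mathrm{im}(u)\cap\mathrm{im}(I(u))\bigr)$ directly.
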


\begin{proof}
Denote by $Z_\rho \subset \mathbb{CP}^1$ the complement of the set of $\rho$-injective points. Abbreviate further
$$Z=\big\{w \in \mathbb{CP}^1: du(w)=0\,\,\textrm{or}\,\,\#u^{-1}(u(w))>1\big\}$$
the set of non-injective points of $u$ and
$$\mathcal{T}=\big\{(w_0,w_1) \in \mathbb{CP}^1 \times \mathbb{CP}^1: u(w_0)=\rho u(w_1),\,\,
w_0 \neq w_1\big\}.$$
Consider the map
$$
\pi \colon \mathcal{T} \to \mathbb{CP}^1, \quad \pi(w_0,w_1)=w_0.
$$
Note that
$$
Z_\rho=Z \cup \mathrm{im}(\pi).
$$
Since $u$ is simple the set $Z$ is finite by positivity of intersection, see \cite[Theorem E.1.2.]{McDuff_Salamon:holomorphic}. It therefore suffices to show that the set $\mathcal{T}$ is finite as well.
To see that first note that by Lemma~\ref{simpsimp} $I(u)$ is simple as well. Therefore it
follows from \cite[Corollary 2.5.3]{McDuff_Salamon:holomorphic} that 
$$
\mathrm{im}(u) \neq \mathrm{im}(I(u)).
$$
Hence by positivity of intersection
$$
\#\big\{(w_0,w_1) \in \mathbb{CP}^1 \times \mathbb{CP}^1: u(w_0)=I(u)(w_1)\big\}<\infty.
$$
Note that
\begin{eqnarray*}
& &\#\big\{(w_0,w_1) \in \mathbb{CP}^1 \times \mathbb{CP}^1: u(w_0)=I(u)(w_1)\big\}=\\
& &\#\big\{(w_0,w_1) \in \mathbb{CP}^1 \times \mathbb{CP}^1: u(w_0)=\rho u(w_1)\big\}
\end{eqnarray*}
We deduce that
$$
\#\mathcal{T}<\infty.
$$
This finishes the proof of the Lemma. 
\end{proof}
We are now ready to prove the main result of this section.
\begin{proof}[Proof of Theorem~\ref{realmain}:] 
We argue by contradiction and assume that there
exists $J \in \mathscr{J}(\Sigma, \rho)$ such that the moduli space $\mathcal{M}^\rho_J(S,p;A)=Hol^{\rho}({J'},(S,p;A)\, )/\C^*$ is empty. 
Since
$A$ is indecomposable there is no bubbling and therefore it follows from compactness of
holomorphic curves that there exists an open neighborhood $\mathscr{J}_0 \subset \mathscr{J}(\Sigma,\rho)$ of $J$ such that $\mathcal{M}^\rho_{J'}(S,p;A)=\emptyset$ for every $J' \in \mathscr{J}_0$. 
In view of Proposition~\ref{pseufifi} there is therefore no pseudo-fixed point of type I
in the space of holomorphic maps $Hol^{\rho}(J',(S,p;A)\,)$ for every $J' \in \mathscr{J}_0$.
Together with Proposition~\ref{pseuII} the assumptions of the theorem show that there does not exist a pseudo-fixed point of type II either and 
therefore there are no pseudo-fixed points at all in $Hol({J'},(S,p;A)\,)$ for every $J' \in \mathscr{J}_0$. 

Furthermore, $A$ is indecomposable, so each holomorphic curve $u$ representing $A$ is simple and we conclude with Lemma~\ref{rinj} that for every $J' \in \mathscr{J}_0$
every holomorphic map $u \in Hol({J'},(S,p;A)\,)$ has $\rho$-injective points.
Transversality arguments, see \cite[Section 6.2, Section 6.3]{McDuff_Salamon:holomorphic}, then show
that there exists an open and dense subset $\mathscr{J}_0^{\mathrm{reg}} \subset \mathscr{J}_0$
such that for every $J' \in \mathscr{J}_0^{\mathrm{reg}}$ the Gromov-Witten invariant 
$GW_A([S],[p])$ can be obtained as the signed count of points in the moduli space $\mathcal{M}({J'},(S,p;A)\,)=Hol({J'},(S,p;A)\, )/\C^*$. 
Since this Gromov-Witten invariant is odd by assumption we conclude that
$$
1=GW_A\big([S],[p]\big)\,\mathrm{mod}\,2=\#\mathcal{M}_{J'}(S,p;A)\,\mathrm{mod}\,2.
$$
However, the moduli space $\mathcal{M}_{J'}(S,p;A)$ is invariant under the involution $I$ which has
no fixed points by construction. 
Therefore the cardinality of the moduli space $\mathcal{M}_{J'}(S,p;A)$ has to be even.
This contradiction finishes the proof of the theorem.
\end{proof}

\section{The proof}
The basic idea to prove Theorem~\ref{main} is to embed a real Liouville domain into a decorated symplectic manifold making it into a Christmas tree.
By hanging up some Christmas balls, or in other words taking holomorphic spheres through $\Sigma$ and a given real point $p$, and applying a stretching construction we obtain an invariant finite energy plane through every point in the real locus.

We need some lemmas to prepare the Christmas tree for the Christmas balls.
\begin{lemma}
Let $(M,\omega,\mathcal D=(\Sigma,A,S)\,)$ be a decorated symplectic manifold with an anti-symplectic involution $\rho$, and assume that $(W_0,\lambda_0,\rho|_{W_0})$ is a real Liouville domain that embeds into the interior of $M-\nu(\Sigma)$ for some $\rho$-invariant neighborhood $\nu(\Sigma)$ of $\Sigma$.
Suppose in addition that $b_1(W_0)=0$.

Then $W_1:=M-\nu(\Sigma)$ carries the structure of a real Liouville domain $(W_1,\lambda_1,\rho|_{W_1})$ such that $(W_0,\lambda_0,\rho|_{W_0})$ is a real Liouville subdomain in the sense that $\lambda_1|_{W_0}=\lambda_0$.
\end{lemma}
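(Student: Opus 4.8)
The plan is to start from the real Liouville form furnished by Lemma~\ref{lemma:real_Liouville_complement} and to correct it by an exact, anti-invariant form, supported away from the boundary, so that it restricts to the prescribed $\lambda_0$ on the embedded copy of $W_0$. First I would apply Lemma~\ref{lemma:real_Liouville_complement} to the decorated symplectic manifold $(M,\omega,\mathcal D)$ with anti-decorating involution $\rho$ (using the given $\rho$-invariant neighbourhood $\nu(\Sigma)$) to obtain a $1$-form $\tilde\lambda\in\Omega^1(W_1)$ with $d\tilde\lambda=\omega|_{W_1}$, $\rho^*\tilde\lambda=-\tilde\lambda$, and with the property that the associated Liouville vector field points outward along $\partial W_1=\partial\nu(\Sigma)$. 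So $(W_1,\tilde\lambda,\rho|_{W_1})$ is a real Liouville domain; what remains is to modify $\tilde\lambda$ in the interior so that it matches $\lambda_0$ on $W_0$.

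On the embedded $W_0$ consider $\eta:=\tilde\lambda|_{W_0}-\lambda_0$. Since $d\tilde\lambda|_{W_0}=\omega|_{W_0}=d\lambda_0$, the form $\eta$ is closed, and since both $\tilde\lambda$ and $\lambda_0$ are anti-invariant we get $\rho^*\eta=-\tilde\lambda|_{W_0}+\lambda_0=-\eta$. As $b_1(W_0)=0$ there is $f\in C^\infty(W_0)$ with $df=\eta$. From $d(f\circ\rho)=\rho^*\eta=-\eta=-df$ we see that $f+f\circ\rho$ is constant (we take $W_0$ connected, as is usual for Liouville domains), so after subtracting a constant we may assume $\rho^*f=-f$. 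Next I would extend $f$ to an anti-invariant function $\hat f\in C^\infty(W_1)$ vanishing near $\partial W_1$. Concretely, since $W_0$ is compact and contained in the $\rho$-invariant open set $\mathrm{int}(W_1)$, there is a $\rho$-invariant open set $U$ with $W_0\subset U$ and $\overline U\subset\mathrm{int}(W_1)$ (take $U_0\cup\rho(U_0)$ for any such $U_0$); choose a $\rho$-invariant cutoff $\beta$ equal to $1$ on $W_0$ and compactly supported in $U$ (average an ordinary such cutoff over $\rho$); extend $f$ arbitrarily to a smooth function on a neighbourhood of $W_0$, multiply by $\beta$ to obtain $g\in C^\infty(W_1)$ supported in $U$, and set $\hat f:=\tfrac12\bigl(g-g\circ\rho\bigr)$. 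Then $\rho^*\hat f=-\hat f$, $\hat f$ vanishes near $\partial W_1$, and on $W_0$ one has $\hat f|_{W_0}=\tfrac12\bigl(f-\rho^*f\bigr)=f$, using $\rho(W_0)=W_0$ and $\rho^*f=-f$.

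Finally I would set $\lambda_1:=\tilde\lambda-d\hat f$. Then $d\lambda_1=\omega|_{W_1}$; moreover $\rho^*\lambda_1=-\tilde\lambda-d(\hat f\circ\rho)=-\tilde\lambda+d\hat f=-\lambda_1$; and $\lambda_1|_{W_0}=\tilde\lambda|_{W_0}-df=\tilde\lambda|_{W_0}-\eta=\lambda_0$. Since $\hat f\equiv0$ near $\partial W_1$ we have $\lambda_1=\tilde\lambda$ there, so the Liouville vector field of $\lambda_1$ coincides near the boundary with that of $\tilde\lambda$ and still points outward; hence $(W_1,\lambda_1,\rho|_{W_1})$ is a real Liouville domain, and $(W_0,\lambda_0,\rho|_{W_0})$ sits inside it as a real Liouville subdomain. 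The only point requiring care is exactly this last one: making the correction term $d\hat f$ supported away from $\partial W_1$ while keeping it anti-invariant, so that boundary convexity is not destroyed by the modification; this is handled by the $\rho$-invariant cutoff $\beta$ above.
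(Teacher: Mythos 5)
Your argument is correct and follows essentially the same route as the paper's proof: apply Lemma~\ref{lemma:real_Liouville_complement} to get an anti-invariant primitive $\tilde\lambda$ on $W_1$, use $b_1(W_0)=0$ to write the closed anti-invariant difference $\tilde\lambda|_{W_0}-\lambda_0$ as $df$ with $\rho^*f=-f$, and correct $\tilde\lambda$ by $d$ of a $\rho$-anti-invariant cutoff extension of $f$ supported away from $\partial W_1$. Your treatment is in fact slightly more explicit than the paper's about why the cutoff can be chosen $\rho$-invariantly and why the modification does not disturb convexity of the boundary.
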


\begin{proof}
Since we will need a cutoff function, we first extend $\lambda_0$ to a neighborhood of $W_0$.
By Lemma~\ref{lemma:real_Liouville_complement}, $W_1:=M-\nu(\Sigma)$ is a real Liouville domain $(W_1,\tilde \lambda_1,\rho)$.
As $\omega=d\tilde \lambda_1=d \lambda_0$ on a neighborhood of $W_0$, we see that $\tilde \lambda_1-\lambda_0$ is closed, and as $b_1(W)=0$, we find a function $f$ on a neighborhood of $W_0$ such that $\lambda_0=\tilde \lambda_1-df$.
It follows directly that $\rho^* df=-df$.
If $\rho^*f\neq-f$, then we replace $f$ by $\frac{1}{2}(f-\rho^*f)$ .

Find a $\rho$-invariant cutoff function $g$ such that $g\equiv 1$ on $W_0$, and such that $g\equiv 0$ on the complement of a neighborhood of $W_0$.
Then $\lambda_1=\tilde \lambda_1-d(gf)$ has the desired properties.
\end{proof}

\begin{lemma}
\label{lemma:holomorphic_disk}
Let $(M,\omega,\mathcal D=(\Sigma,A,S)\,)$ be a decorated symplectic manifold, and assume that $(W_0,\lambda_0,\rho|_{W_0})$ is a real Liouville domain that embeds into the interior of $M-\nu(\Sigma)$ for some $\rho$-invariant neighborhood $\nu(\Sigma)$ of $\Sigma$.
Suppose in addition that $b_1(W_0)=0$.
Let $J$ be an almost complex structure on $M$ that is compatible with $\omega$, and SFT-like near $\partial W_0$.

Assume that $u:\mathbb C \mathbb P^1\to M$ is a $J$-holomorphic sphere through a point $p\in W_0$ such that $[u]\circ [\Sigma]=1$.
Then the component $C$ of $u^{-1}(W_0)$ containing $z_0$ with $u(z_0)=p$ satisfies the following:
\begin{itemize}
\item $C$ is diffeomorphic to a disk.
\item $\int_C u|_{C}^*\omega=\int_C u|_{C}^*d\lambda_0 \leq 1$.
In particular, the SFT energy of $u|_C$ is bounded from above by $1$.
\end{itemize}   
\end{lemma}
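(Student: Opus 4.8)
The plan is to remove the point where $u$ meets $\Sigma$, turning $u$ into a proper holomorphic plane in the Liouville completion of $M\setminus\nu(\Sigma)$, and then to slice $\mathbb{CP}^1$ by the level sets of the cylindrical coordinate along $u$ and apply the maximum principle on the SFT-like end. First, since $[u]\circ[\Sigma]=1$, any factorization $u=v\circ\phi$ forces $\deg\phi\mid 1$, so $u$ is simple; and since $\Sigma$ is a $J$-holomorphic submanifold with $\mathrm{im}(u)\not\subset\Sigma$, positivity of intersections (\cite[Theorem~E.1.2]{McDuff_Salamon:holomorphic}) shows $u^{-1}(\Sigma)=\{w_\infty\}$, a single transverse point. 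Hence $v:=u|_{\mathbb{CP}^1\setminus\{w_\infty\}}$ takes values in $M\setminus\Sigma$, and, after identifying $\mathbb{CP}^1\setminus\{w_\infty\}$ with $\mathbb{C}$, it is a proper $J$-holomorphic plane that escapes towards $\Sigma$ as $|\zeta|\to\infty$. Using the preceding lemma we may assume $W_0=M\setminus\nu(\Sigma)$ with $\lambda_0=\lambda_1$; then $M\setminus\Sigma$ is the completion $\widehat W_0$, with $\Sigma$ sitting at the positive end, on which $J$ is SFT-like, and where we write $t$ for the cylindrical coordinate ($t=0$ on $\partial W_0$, $t\to+\infty$ at $\Sigma$). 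So $t(v(\zeta))\to+\infty$ as $|\zeta|\to\infty$.

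\textbf{Slicing and the maximum principle.} Pick a regular value $c>0$ of $t\circ v$. Then $v^{-1}(\{t=c\})$ is a finite disjoint union of circles in $\mathbb{C}$ (no non-compact components occur, since $t\circ v\to+\infty$ at infinity), and these circles cut $\mathbb{CP}^1$ into finitely many regions; call a region \emph{low} if $t\circ v<c$ there (equivalently it maps into $W_0^c:=W_0\cup\{t\le c\}$) and \emph{high} if $t\circ v>c$ there. The marked point $z_0$ lies in a low region, since $t(v(z_0))=t(p)<0<c$; call that region $C$. The puncture $w_\infty$ lies in a high region $\Omega_\infty$. I claim $\Omega_\infty$ is the only high region. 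Indeed, if $\Omega\neq\Omega_\infty$ were high, then $\overline\Omega$ would be compact in $\mathbb{C}$, so $v(\overline\Omega)\subset\{t\ge c\}$, which lies in the SFT-like end, where $t\circ v$ is subharmonic; since $t\circ v\equiv c$ on $\partial\Omega$ (the minimum of $t\circ v$ over $\overline\Omega$), the function $t\circ v$ would attain an interior maximum on $\overline\Omega$ and hence be locally constant by the maximum principle, contradicting that the simple curve $u$ is nowhere locally constant.

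\textbf{Conclusion and energy bound.} It follows that the union of the low regions equals $\mathbb{CP}^1\setminus\Omega_\infty$; as $\Omega_\infty$ is a connected planar surface, its complement is a disjoint union of closed disks, and no such disk can contain a further circle of $v^{-1}(\{t=c\})$ (that would create another high region), so each of these disks is precisely one low region. In particular $C$ is a disk. For the area bound, $u^*\omega\ge 0$ pointwise (as $J$ is $\omega$-compatible), so $\int_C u^*\omega\le\int_{\mathbb{CP}^1}u^*\omega=\langle[\omega],[u]\rangle=\langle\mathrm{PD}[\Sigma],[u]\rangle=[\Sigma]\circ[u]=1$; and on $W_0\supset u(C)$ one has $\omega=d\lambda_0$, whence $\int_C u|_C^*\omega=\int_C u|_C^*d\lambda_0\le 1$. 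Finally, for $\phi\in\Lambda$ the form $\omega_\phi=d\lambda_\phi$ equals $\phi(0)\,\omega$ on $W_0$ with $\phi(0)\in[0,1]$, so $\int_C u^*\omega_\phi\le\int_C u^*\omega\le 1$; taking the supremum over $\phi$ bounds the SFT energy of $u|_C$ by $1$.

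\textbf{Main obstacle.} The heart of the argument is the maximum-principle step: one must ensure that, apart from the single excursion of $u$ out to $\Sigma$, every excursion of $u$ outside $W_0$ stays in the region where $t\circ v$ is subharmonic. This is exactly why one first enlarges $W_0$ to $M\setminus\nu(\Sigma)$, so that ``outside $W_0$'' is nothing but the SFT-like cylindrical end towards $\Sigma$; it is also where the hypotheses enter — $b_1(W_0)=0$ to perform the enlargement (matching the Liouville forms on the nose), and $\Sigma$ being $J$-holomorphic to pin down $u^{-1}(\Sigma)$ by positivity of intersections. The remaining ingredients — genericity of the regular value $c$, and the elementary planar-topology bookkeeping — are routine.
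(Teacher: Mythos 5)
Your overall skeleton agrees with the paper's: positivity of intersections pins down a single point $w_\infty$ over $\Sigma$, one shows that the only uncontrolled complementary region of the dividing circles is the one containing $w_\infty$, and planar topology plus the total area $\langle[\omega],[u]\rangle=1$ finishes the proof. The gap is in the step that rules out the extra ``high'' regions. You assert that $M\setminus\Sigma$ is the symplectization completion $\widehat W_0$ with $\Sigma$ sitting at $t=+\infty$, and that $J$ is SFT-like on all of $\{t\ge c\}$, so that $t\circ v$ is subharmonic on every high region. Neither claim is available. First, $M\setminus\Sigma$ has finite symplectic volume, whereas a positive cylindrical end $([0,\infty)\times\partial W_0, d(e^t\alpha))$ has infinite volume; equivalently, the forward Liouville flow out of $\partial W_0$ cannot be complete (it expands volume exponentially), so there is no globally defined cylindrical coordinate $t$ on $M\setminus(\Sigma\cup W_0)$ tending to $+\infty$ at $\Sigma$ --- $t$ only makes sense on a collar of $\partial W_0$. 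Second, and independently, the hypothesis only makes $J$ SFT-like \emph{near} $\partial W_0$; on the rest of $M\setminus W_0$ (in particular on $\nu(\Sigma)\setminus\Sigma$) it is an arbitrary compatible structure, for which $t\circ v$ has no reason to be subharmonic. A high region can, and in general will, leave the collar, so your maximum-principle contradiction does not apply to it. Relatedly, ``we may assume $W_0=M\setminus\nu(\Sigma)$'' is not legitimate: the lemma concerns the component of $u^{-1}(W_0)$ for the given $W_0$, and enlarging $W_0$ both changes that component and fails to transport the SFT-like hypothesis to the new boundary.

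The paper closes exactly this gap with Stokes' theorem rather than the maximum principle: by the preceding lemma (this is where $b_1(W_0)=0$ enters) there is a primitive $\lambda_1$ of $\omega$ on $M\setminus\nu(\Sigma)$ restricting to $\lambda_0$ on $W_0$, so for a complementary component $\tilde C$ not containing $w_\infty$ one gets
$$
0<E(u|_{\tilde C})=\int_{\tilde C}u^*\omega=\int_{\partial\tilde C}u^*\lambda_1,
$$
while the SFT-like condition, needed only along $\partial W_0$, forces this boundary integral to be negative. That argument is insensitive to where $u(\tilde C)$ travels inside $M\setminus\Sigma$, which is precisely what your version cannot control. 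Your concluding area and SFT-energy estimates are fine and match the paper.
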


\begin{proof}
After possibly shifting the boundary $\partial W_0$ a little, we can assume that $u^{-1}(\partial W_0)$ consists of finitely many circles.
Let $C$ denote the component of $u^{-1}(W_0)$ containing $z_0$.
We claim that $C$ has only one boundary component.
To see why, note that if $C$ has more than one boundary component, then there is a connected component of $\tilde C:=\mathbb C \mathbb P^1-int(C)$ with the properties
\begin{itemize}
\item $\tilde C$ shares a boundary component with $C$.
\item $u(\tilde C)$ does not intersect $\nu(\Sigma)$, and is contained in $M-int(W_0)$.
\end{itemize}
To see that the latter condition can be imposed, we observe that $u$ intersects $\Sigma$ only once, and we also use that $\mathbb C \mathbb P^1$ has genus $0$.

Now apply the previous lemma to see that $M-\nu(\Sigma)$ carries the structure of a real Liouville domain $(W_1,\lambda_1)$ with real Liouville subdomain $(W,\lambda)$. 
This allows us to compute the energy of $\tilde C$ via Stokes' theorem,
$$
E(u|_{\tilde C})=\int_{\tilde C} u_{\tilde C}^*\omega
=\int_{\tilde C} d u_{\tilde C}^* \lambda_1
=\int_{\partial \tilde C} u_{\tilde C}^* \lambda_1<0.
$$
The last inequality holds, because the orientation induced by the outward pointing normal is minus the one induced by the Reeb vector field; one can see this by using that $J$ is SFT-like near $\partial W_0$.
Since the energy of the holomorphic curve $u|_{\tilde C}$ is positive, this is a contradiction, so we conclude that $C$ has one boundary component.
It follows directly that $C$ is diffeomorphic to a disk.
The claimed energy estimate is now also clear since $\int_{\mathbb C \mathbb P^1} u^*\omega=1$.
\end{proof}

We will now apply a stretching argument to obtain an invariant finite energy plane.
This is illustrated in Figure~\ref{fig:holomorphic_spheres}.
\begin{figure}[htp]
\def\svgwidth{0.25\textwidth}%
\begingroup\endlinechar=-1
\resizebox{0.25\textwidth}{!}{%
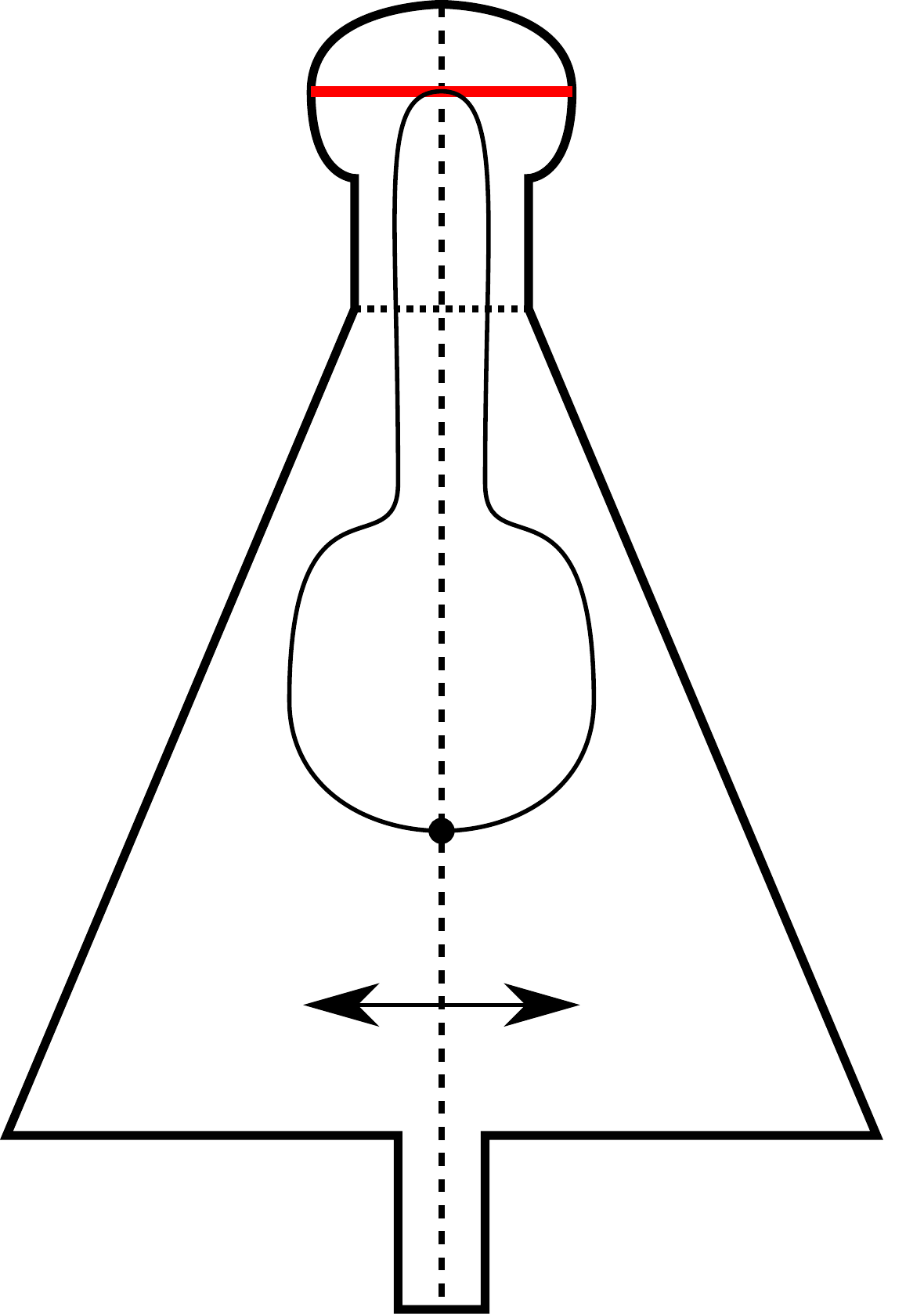%
}\endgroup
\caption{Hanging up Christmas balls (holomorphic spheres) in a Christmas tree}
\label{fig:holomorphic_spheres}
\end{figure}
Let $X$ denote the Liouville vector field on $M-\Sigma$.
Take a point $p\in W_0$, and for $\tau\in \R_{\geq 0}$ define $p_\tau$ by following the Liouville flow backwards, $p_\tau=Fl^X_{-\tau}(p)$.
Define the stretched Liouville domain $W_0^\tau$ by
$$
W_0^\tau:=(W_0,\omega=d\lambda_0) \cup_\partial ([0,\tau]\times \partial W_0,d(e^t \lambda_0|_{\partial W_0}) \,)
.
$$
Choose a compatible complex structure $J_\tau$ on $W_0^\tau$ that is SFT-like on $[0,\tau]\times \partial W_0$.
We choose this sequence $J_\tau$ such that it is a constant sequence of complex structures when restricted to $W_0$.
Since the map $x\mapsto Fl^X_\tau(x)$ provides a symplectic deformation from $W_0$ to $W_0^\tau$, we can pull back $J_\tau$ to a complex structure on $W_0$ that is SFT-like near the boundary. Extend this $J_\tau$ to a compatible complex structure $\tilde J_\tau$ for $(M,\omega)$.

With Lemma~\ref{lemma:holomorphic_disk} applied to an invariant holomorphic sphere obtained with Theorem~\ref{realmain}, we find a $\tilde J_\tau$-holomorphic disk
$$
\tilde u_\tau:\tilde C_\tau\subset \C \P^1 \longrightarrow W_0
$$
going through $p_\tau$, and with boundary on $\partial W_0$.
We now stretch the Liouville domain $W_0$ to a Liouville domain $W_0^\tau$ using the above deformation.
This deformation also gives us a $J_\tau$-holomorphic curve
$$
u_\tau: C_\tau \longrightarrow W_0^\tau
$$
going through $p$.
As the Hofer energy of $\tilde u_\tau$ is bounded by $1$, so is the Hofer energy of $u_\tau$.

Denote the norm induced by $\omega_\tau(\cdot,J_\tau \cdot)$ by $\Vert \cdot \Vert_{\tau}$.
By rescaling the domain we can ensure that $\max_{z\in C_\tau} \Vert du_\tau \Vert_\tau=1$; we need to rescale the disk $C_\tau$ for this, but we will continue to write $C_\tau$ for this rescaled disk.
Since $p$ lies in $W_0$ and the boundary of the disk, $u_\tau(\partial C_\tau)$, lies on $\{ \tau \} \times \partial W_0$, we see directly that radius for the disk $C_\tau$ has to be at least $\tau$ by a very crude estimate using $\max_{z\in C_\tau} \Vert du_\tau \Vert_\tau=1$.
Taking the limit $\tau\to \infty$, we find a convergent subspace, and obtain a map
$$
u_\infty: \C \to W_0^\infty,
$$
where $W_0^\infty$ is the completion of $W_0$.
As the Hofer energy of $u_\infty$ is bounded from above by $1$, we conclude that $u_\infty$ is the desired finite energy plane through $p\in W_0$.

This stretching construction also implies the well-known corollary, see also \cite{lu}.
\begin{cor}
Let $(W,\lambda)$ be a Liouville domain admitting an embedding into a decorated symplectic manifold $(M,\omega,\mathcal{D})$.
Suppose that $b_1(W)=0$. Then $W$ is uniruled.
Furthermore, there exists a periodic orbit of period less than or equal to 1 for the Reeb flow on
$\partial W$.
\end{cor}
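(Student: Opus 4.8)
The plan is to transcribe the proof of Theorem~\ref{main}, simply dropping every reference to the anti-symplectic involution; each ingredient has a non-real analogue with an identical (and simpler) proof. First I would record the non-real counterpart of Theorem~\ref{realmain}. Since $A \circ [\Sigma] = 1$ and $GW_A([S],[p])$ is odd, this Gromov--Witten invariant is in particular nonzero; because $A$ is indecomposable there is no bubbling, so Gromov compactness together with the transversality results of \cite[Section 6.2, Section 6.3]{McDuff_Salamon:holomorphic} show that for \emph{every} $\omega$-compatible almost complex structure $J$ on $M$ for which $\Sigma$ is a $J$-holomorphic submanifold, and every $p \in M \setminus \Sigma$, there is a $J$-holomorphic sphere $u \colon \C \P^1 \to M$ with $[u] = A$, $u(\sigma) = p$ and $u(\nu) \in S$; note $[u] \circ [\Sigma] = 1$ and $\int_{\C \P^1} u^* \omega = 1$.

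Next I would apply the non-real version of Lemma~\ref{lemma:holomorphic_disk} --- whose proof uses only that $J$ is SFT-like near $\partial W$, Stokes' theorem, and that $\C \P^1$ has genus $0$, but nothing about $\rho$ --- to conclude that the component $C$ of $u^{-1}(W)$ containing the preimage of $p$ is diffeomorphic to a disk and satisfies $\int_C u|_C^* d\lambda \le 1$, so that the Hofer energy of $u|_C$ is at most $1$.

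Then comes the stretching construction, verbatim as in the proof of Theorem~\ref{main}. Given an SFT-like almost complex structure on the completion of $W$, extend it to a compatible $J$ on $M$ for which $\Sigma$ is $J$-holomorphic; for $\tau \ge 0$ form $W^\tau = W \cup_\partial ([0,\tau] \times \partial W, d(e^t \lambda|_{\partial W}))$ with an SFT-like $J_\tau$ agreeing with the fixed $J$ on $W$, and, pulling back along the Liouville flow, obtain from the sphere above a $J_\tau$-holomorphic disk $u_\tau \colon C_\tau \to W^\tau$ through $p$, with boundary on $\{\tau\} \times \partial W$ and Hofer energy $\le 1$. After rescaling the domain so that $\max_{z \in C_\tau} \| du_\tau \|_\tau = 1$, a crude length estimate forces the radius of $C_\tau$ to be at least $\tau$; letting $\tau \to \infty$ and passing to a convergent subsequence produces a finite energy plane $u_\infty \colon \C \to W^\infty$ through $p$. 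As $p$ and the SFT-like almost complex structure were arbitrary, $W$ is uniruled. Finally, the asymptotic analysis of finite energy planes \cite{hofer-wysocki-zehnder1, hofer-wysocki-zehnder2, hofer-wysocki-zehnder3, mora}, exactly as in the deduction of Theorem~\ref{symper}, shows that $u_\infty$ converges at its puncture to a periodic Reeb orbit on $\partial W$ of period at most its Hofer energy, hence at most $1$.

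The step demanding the most care --- the main obstacle --- is the first one: extracting, for \emph{every} admissible $J$ rather than only a generic one, an honest $J$-holomorphic sphere through $p$ meeting $S$ from the mere nonvanishing of the Gromov--Witten invariant. This is precisely where indecomposability of $A$ is used, to exclude the broken configurations that Gromov compactness would otherwise permit; with that in place the remainder is a faithful copy of the argument already given in the real case.
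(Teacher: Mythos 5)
Your proposal is correct and follows exactly the route the paper intends: the authors dispose of this corollary with the single remark that ``this stretching construction also implies the well-known corollary,'' and what you have written out --- nonvanishing of $GW_A([S],[p])$ plus indecomposability of $A$ to get a $J$-holomorphic sphere in class $A$ through $p$ and $S$ for \emph{every} admissible $J$, the non-real version of Lemma~\ref{lemma:holomorphic_disk} to cut out a disk of Hofer energy at most $1$, the neck-stretching limit to a finite energy plane, and the asymptotics of \cite{hofer-wysocki-zehnder1} for the periodic orbit --- is precisely the non-real specialization of the proof of Theorem~\ref{main}, with the machinery of pseudo-fixed points correctly recognized as unnecessary here.
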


\noindent
{\bf Acknowledgments.}
We thank Jungsoo Kang for helpful comments.
The second author was partially supported by the NRF Grant 2012-011755 funded by the Korean government. Both authors also hold joint appointments in the Research Institute of Mathematics, Seoul National University.

\end{document}